\documentclass[12pt]{article}
\usepackage{mathtools}
\usepackage{amsmath,amssymb,amsbsy}
\usepackage{graphicx}
\usepackage{multirow}

\usepackage{amsfonts}
\usepackage{amsthm}
\usepackage{amssymb}
\usepackage{graphicx, enumerate}
\usepackage{amsmath}
\usepackage{latexsym}
\usepackage{longtable}
\usepackage{tabularx}
\usepackage{amsmath}
\usepackage{amsfonts}
\usepackage{amsthm}
\usepackage{setspace}
\usepackage{graphicx}
\usepackage{float}
\usepackage{rotating}
\usepackage{tikz}
\usepackage{verbatim}
\usepackage{soul}

\textwidth 16cm \hoffset -1cm

\def\zet{\mathbb{Z}}

\def\fk2{\lfloor\frac{k}{2}\rfloor}
\def\ck2{\lceil\frac{k}{2}\rceil}
\def\k1{\lfloor\frac{k+1}{2}\rfloor}

\def\Ge {\mathbb{G}}
\def\Ce {\mathbb{C}}
\def\Pe {\mathbb{P}}

\newtheorem{theorem}{Theorem}[section]
\newtheorem{lemma}[theorem]{Lemma}
\newtheorem{conjecture}[theorem]{Conjecture}

%\newauthor{%
%Sylwia Cichacz}{%
%S. Cichacz}{%
%AGH University of Science and Technology,\\ 
%al. A. Mickiewicza 30, 30-059 Krakow, Poland}[%
%cichacz@agh.edu.pl]

%\newauthor{%
%Agnieszka G\"{o}rlich}{%
%A. G\"{o}rlich}{%
%AGH University of Science and Technology,\\ 
%al. A. Mickiewicza 30, 30-059 Krakow, Poland}[%
%forys@agh.edu.pl]

%\newauthor{%
%Zsolt Tuza}{%
%Zs. Tuza}{%
%Alfr\'{e}d R\'{e}nyi Institute of Mathematics, Hungarian Academy of Sciences,\\
%H-1053 Budapest, Re\'{a}ltanoda u.~13--15;\\
%and\\
%Department of Computer Science and Systems Technology,\\
%University of Pannonia, H-8200 Veszpr\'{e}m, Egyetem
%u.~10, Hungary}[%
%tuza@dcs.uni-pannon.hu]
\title{On some graph-cordial Abelian groups}
\author{Sylwia Cichacz$^{1,}$\\\\
\normalsize $^1$AGH University of Science and Technology, \vspace{2mm} Poland}

%\keywords{cordial graph, path-cordial Abelian groups}

%\classnbr{05C65, 05C78}
\begin{document}

\maketitle
\begin{abstract}
Hovey introduced $A$-cordial labelings  as a  generalization of cordial and harmonious labelings  \cite{Hovey}. If $A$ is an Abelian group, then a labeling $f \colon V (G) \rightarrow A$ of the vertices of some graph $G$ induces an edge labeling on $G$; the edge $uv$ receives the label $f (u) + f (v)$. A graph $G$ is $A$-cordial if there is a vertex-labeling such that (1) the vertex label classes
differ in size by at most one and (2) the induced edge label classes differ in size by at most one.

Patrias and Pechenik studied the larger class of finite abelian groups $A$ such that all path
graphs are $A$-cordial. They posed a conjecture that all but finitely many paths graphs are $A$-cordial for any Abelian group $A$. In this paper we solve this conjecture. Moreover we show that all cycle graphs are $A$-cordial for any Abelian group $A$ of odd order.
\end{abstract}
%\begin{keyword}
%$\vi$-cordial graph, hypergraph, labeling of hypergraph, hypertree 
%\end{keyword}

%%%%%%%%%%%%%%%%%%%%%%%%%%%%%%%%%%%%%%%%%%%%%%%%%%%%%%%%%%%%%%%%%%
%%%%%%%%%%%%%%%%%%%%%%%%%%%%%%

\section{Introduction}
Assume $A$ is a finite Abelian group of order $n$ with the operation denoted by $+$.  For convenience we will write $ka$ to denote $a + a + \ldots + a$ (where the element $a$ appears $k$ times), $-a$ to denote the inverse of $a$, and use $a - b$ instead of $a+(-b)$.  Moreover, the notation $\sum_{a\in S}{a}$ will be used as a short form for $a_1+a_2+a_3+\dots+a_t$, where $a_1, a_2, a_3, \dots,a_t$ are all elements of the set $S$. The identity element of $A$ will be denoted by $0$.

For a graph $G=(V,E)$,  an Abelian group $A$ and an $A$-labeling
$c: V \to A$ let $v_c(a)=|c^{-1}(a)|$.
 The labeling $c$ induces an edge
labeling $c^*:E \to A$  defined by $c^*(e)=\sum_{v \in
e}c(v)$, let $e_{c}(a)=|{c^*}^{-1}(a)|$. The labeling $c$ is called an $A$-\textit{cordial labeling} if
$|v_c(a)-v_c(b)| \leq 1$ and $|e_{c}(a)-e_{c}(b)|\leq 1$ for any
$a,b \in A$.
 A graph $G$ is said to be {\it $A$-cordial} if it admits an $A$-cordial
labeling $c$.

Cordial labeling of graphs was introduced by Cahit \cite{Cahit} as a weakened
version of graceful labeling and harmonious labeling. This notion was generalized by Hovey for any Abelian group of order $k$ \cite{Hovey}.
So far research on $A$-cordiality has mostly focused on the case where $A$ is cyclic  and so called $k$-\textit{cordial}. Hovey \cite{Hovey} proved that all paths are $k$-cordial for any $k$ and raised the conjecture (still open for $k > 6$) that if $T$ is a tree, then it is $k$-cordial for every $k$. Also an $A$-cordial labeling of hypertrees was considered (see \cite{CGT,CGT1}).

Hovey proved that cycles are $k$-cordial for all odd $k$; for $k$ even $C_{2mk+j}$ is $k$-cordial when $0\leq j \leq \frac{k}{2}+ 2$ and when $k < j <2k$. Moreover he showed that $C_{(2m+1)k}$ is not $k$-cordial and posed a conjecture that  for $k$ even the cycle $C_{2mk + j}$ where $0 \leq j < 2k$, is $k$-cordial if and
only if $j \neq k$. This conjecture was verified by Tao \cite{Tao}.  Tao’s result combined with those of Hovey show that:
\begin{theorem}[\cite{Hovey,Tao}]\label{cycle}
The cycle $C_n$ is $k$-cordial if and only if $k$ is odd or  $n\neq 2mk+k$ for some positive integer  $m$.
\end{theorem}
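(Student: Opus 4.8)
The plan is to prove the two implications separately: the ``only if'' part by a counting argument, and the ``if'' part by explicit constructions together with a length-increasing reduction. For necessity, suppose $k$ is even and $n=(2m+1)k$, and let $c$ be a $k$-cordial labeling of $C_n$, aiming for a contradiction. Since $k\mid n$, cordiality forces $v_c(a)=e_c(a)=n/k$ for every $a\in\mathbb{Z}_k$. As $C_n$ is $2$-regular,
\[
\sum_{uv\in E(C_n)}c^*(uv)=\sum_{uv\in E(C_n)}\bigl(c(u)+c(v)\bigr)=2\sum_{v\in V(C_n)}c(v).
\]
Both $\sum_{v}c(v)$ and $\sum_{uv}c^*(uv)$ equal $(n/k)\,\sigma$ with $\sigma=\sum_{a\in\mathbb{Z}_k}a$, so $(n/k)\sigma=2(n/k)\sigma$, i.e.\ $(n/k)\sigma=0$ in $\mathbb{Z}_k$. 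But for even $k$ one has $\sigma=k/2$, and $n/k=2m+1$ is odd, whence $(n/k)\sigma=k/2\neq 0$ in $\mathbb{Z}_k$ --- a contradiction. This is Hovey's non-existence statement.

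\textbf{A length-increasing reduction.} For the converse I would first prove the splicing lemma: if $n\ge k$ and $C_n$ is $k$-cordial, then $C_{n+2k}$ is $k$-cordial. A $k$-cordial $C_n$ with $n\ge k$ has at least one edge $vw$ labeled $0$, since $e_c(0)\ge\lfloor n/k\rfloor\ge 1$; setting $a=c(v)$, I delete $vw$ and insert a path $v,u_1,\dots,u_{2k},w$ in which $u_{2i-1}$ and $u_{2i}$ both receive the label $-a+(i-1)$ for $i=1,\dots,k$. The $2k$ inserted vertices realize each element of $\mathbb{Z}_k$ exactly twice, and a direct computation (using $c(w)=-a$) shows that the $2k+1$ new edge labels, once the deleted label $0$ is removed, again realize each element of $\mathbb{Z}_k$ exactly twice; balance of the vertex and edge classes is therefore preserved. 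Iterating the lemma reduces the theorem to producing $k$-cordial labelings of $C_n$ for the finitely many lengths $3\le n<3k$, omitting $n=k$ when $k$ is even; and $C_{2k}$ (indeed $C_{2mk}$) is settled immediately by the ``staircase'' vertex pattern $0,0,1,1,\dots,k-1,k-1$, which gives each vertex label and each edge label exactly twice.

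\textbf{The remaining base cycles.} Variants and combinations of the staircase --- truncating it, reversing its orientation, shifting one run by $\pm1$, or prepending a short corrective segment --- can be combined to give $k$-cordial labelings of the remaining base cycles. When $k$ is odd the obstruction $\sigma=0$ vanishes and this goes through for every length below $3k$, giving Hovey's theorem that all cycles are $k$-cordial for odd $k$. When $k$ is even one must stay within the admissible lengths: Hovey built the labelings for the residues $0\le j\le\frac{k}{2}+2$ and $k<j<2k$ modulo $2k$, and Tao supplied the missing window $\frac{k}{2}+2<j<k$, which finishes the proof.

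\textbf{Main obstacle.} The genuine difficulty is precisely Tao's window $\frac{k}{2}+2<j<k$ for even $k$: there the plain staircase over- or under-represents one label by $2$, so one has to interleave the ascending runs with carefully placed extra vertices, and the substantive work is the bookkeeping --- verifying, for every such $j$, that both $\max_a v_c(a)-\min_a v_c(a)\le 1$ and $\max_a e_c(a)-\min_a e_c(a)\le 1$. The necessity argument, the splicing lemma, and the odd-$k$ constructions are by comparison routine.
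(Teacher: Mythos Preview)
The paper does not prove Theorem~\ref{cycle}; it is quoted as a known background result with citations to Hovey and Tao, and no argument is given in the paper itself. So there is no ``paper's own proof'' to compare your proposal against.

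On the merits of your sketch: the necessity argument is the standard parity count and is correct. Your splicing lemma is also correct --- the $2k+1$ new edge labels you create are $0$ together with the multiset $\{-2a+j : 0\le j\le 2k-1\}$, which modulo $k$ hits every residue exactly twice, so after deleting the original $0$-edge the net change is $+2$ in each edge class and balance is preserved. The reduction to base cycles $3\le n<3k$ (with $n=k$ excluded for even $k$) is therefore sound. What your proposal does not do is actually construct the base labelings: you point to ``variants and combinations of the staircase'' and then defer to Hovey and Tao for the details. That is an honest summary of where the work lies, and your identification of Tao's window $\frac{k}{2}+2<j<k$ as the genuine obstacle is accurate, but as written the proposal is an outline of the Hovey--Tao strategy rather than a self-contained proof.

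One minor remark: the theorem as stated in the paper says ``for some positive integer $m$'', which on a literal reading would leave $n=k$ (the case $m=0$) unaccounted for; your necessity argument, correctly, rules out $C_k$ for even $k$ as well.
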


Recently Patrias and Pechenik considered a dual problem \cite{Pechenik2}. Namely, let $\mathbb{G}$ be a family of graphs. We say a group $A$ is $\Ge$-\textit{cordial} if every $G \in \Ge$ is $A$-cordial. We say $A$ is \textit{weakly} $\Ge$-\textit{cordial} if all but finitely many $G \in \Ge$ are $A$-cordial.

Let $\Pe$ denote the class of path graphs and $\Ce$ denote the class of cycle graphs. Patrias and Pechenik \cite{Pechenik2} posed the following conjecture.
\begin{conjecture}[\cite{Pechenik2}]\label{Pechenik}
All finite Abelian groups are weakly $\Pe$-cordial.
\end{conjecture}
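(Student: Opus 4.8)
The plan is to prove the sharper assertion that for every finite Abelian group $A$ of order $k$ there is a bound $N=N(A)$ such that $P_n$ is $A$-cordial for all $n\ge N$; the exceptional paths then lie among $P_1,\dots,P_{N-1}$. Put $\sigma=\sum_{a\in A}a$, and recall that $\sigma=0$ unless $A$ has a unique element of order two, in which case $\sigma$ equals that element, so in every case $2\sigma=0$. The engine is a cyclic arrangement that reproduces itself under the edge-sum operation. Call a cyclic listing $(b_1,\dots,b_k)$ of the elements of $A$ a \emph{block} if $b_1+b_2,\,b_2+b_3,\,\dots,\,b_{k-1}+b_k,\,b_k+b_1$ is again a permutation of $A$, and call a cyclic sequence $(z_1,\dots,z_{2k})$ using each element of $A$ twice a \emph{double block} if its $2k$ cyclic consecutive sums again use each element of $A$ twice. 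Summing the consecutive sums shows a block can exist only if $2\sigma=\sigma$, i.e.\ $\sigma=0$, whereas for a double block the obstruction is merely $2\sigma=0$ and never binds. Note that a block is precisely a complete mapping of $A$ that is a single $k$-cycle.

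Suppose $A$ has a block. Then concatenating $m$ copies of it labels $P_{mk}$ $A$-cordially: within each copy the $k-1$ internal edges carry $A\setminus\{c\}$, where $c:=b_k+b_1$ is the wrap value, while the $m-1$ junction edges all carry $c$. Appending a prefix $b_1,\dots,b_r$ of the block ($1\le r<k$) raises the length to $mk+r$, keeps the vertex classes balanced (a prefix of a permutation), contributes $r-1$ further distinct internal sums (none equal to $c$) and one more copy of $c$, and a short count shows both class families stay within one of each other. Hence $P_n$ is $A$-cordial for all $n\ge k$. This disposes of every $A$ that has a block. For the remaining groups one combines a double block with an \emph{insertion lemma}: given an $A$-cordial $P_n$ with $n\ge 2$, choose an edge $uv$, rotate the double block so its last entry equals $u$, and splice it into $uv$; the internal edges supply $A$ doubled minus $c$, the new edge $u+z_1$ equals $c$, and the new edge $z_{2k}+v$ equals $u+v$ and replaces the removed edge, so every vertex and edge class grows by exactly $2$, and $P_{n+2k}$ is $A$-cordial. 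Thus for such a group it suffices to exhibit a double block together with $A$-cordial labelings of $P_n$ for $2k$ consecutive values of $n$ (the base cases).

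It remains to produce the blocks, double blocks and base cases. When $\sigma=0$ and $A$ is not an elementary abelian $2$-group I expect a block to exist, and will try to build one by induction along the primary decomposition $A\cong\mathbb{Z}_{p_1^{a_1}}\times\dots\times\mathbb{Z}_{p_t^{a_t}}$: explicit arithmetic blocks for cyclic prime-power factors, a product construction to merge factors, and a handful of small groups done directly — this is the sequenceability-type ingredient. When $A$ has a unique involution, then $k$ is even and, unless $A=\mathbb{Z}_2$, $A$ has an element of order at least $4$, so $A\times\mathbb{Z}_2$ has vanishing element-sum and is not an elementary abelian $2$-group; hence it has a block $(w_1,\dots,w_{2k})$, and the natural $2$-to-$1$ projection $\pi\colon A\times\mathbb{Z}_2\to A$, being a homomorphism, carries it entrywise to a double block of $A$. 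Elementary abelian $2$-groups $\mathbb{Z}_2^m$ admit no block ($0$ is never a sum of two distinct elements) but do admit double blocks built in Gray-code fashion. In each non-block case one still has to supply the $2k$ base cases; here double blocks, short ad hoc patches, and the $\mathbb{Z}_2$-reduction (lifting a $\mathbb{Z}_2$-cordial labeling of $P_n$ along a surjection $A\to\mathbb{Z}_2$) should do it, but this is fiddly and group-dependent.

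The main obstacle is thus twofold, both parts existential in nature: establishing that a single-cycle complete mapping (a block) exists for every Abelian group not obstructed from having one — the sequenceability-flavoured heart of the argument — and, for the obstructed groups (principally the elementary abelian $2$-groups), constructing $A$-cordial paths for a full window of $2k$ consecutive lengths by hand. Everything else — the concatenation producing $P_{mk}$ and its shifts, the insertion lemma, and the bookkeeping confining the exceptions to an initial segment — is routine.
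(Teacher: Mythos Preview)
Your framework is sound in the parts you actually carry out: the block–concatenation argument for $P_{mk+r}$ is correct, and your insertion lemma (splicing a rotated double block into an edge to pass from $P_n$ to $P_{n+2k}$) is a clean device. The difficulty is that the pieces you defer are precisely the substance of the problem. Your ``block'' is exactly a harmonious ordering of $A$, and the assertion that one exists whenever $\sigma=0$ and $A$ is not an elementary abelian $2$-group is the theorem of Beals, Gallian, Headley and Jungreis; the inductive scheme you sketch along the primary decomposition is not a proof of that theorem, and the product step in particular does not go through naively. For the one-involution groups your projection from a block of $A\times\zet_2$ to a double block of $A$ is correct, but it still presupposes the harmonious-groups theorem for $A\times\zet_2$. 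For elementary abelian $2$-groups, ``double blocks built in Gray-code fashion'' is not a construction: a standard Gray code gives consecutive sums that are all unit vectors, so each non-unit element (including $0$) is never hit; you would need a genuinely different combinatorial object. Finally, the $2k$ base cases you need for every non-block group are not supplied, and for $(\zet_2)^p$ they cannot be obtained by any global trick since $P_{2^p}$ and $P_{2^p+1}$ are genuinely not $A$-cordial.

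By contrast, the paper sidesteps all of this. For groups that are not elementary abelian $2$-groups it simply quotes Lev's theorem on $\sigma_{\max}(A)$ together with the Patrias--Pechenik criterion ``$A$ is $\Pe$-cordial iff $P_{|A|}$ is $A$-cordial'' to conclude that such $A$ are in fact fully $\Pe$-cordial. The only real work is for $A\cong(\zet_2)^p$ with $p>3$: there the paper invokes Headley's theorem that such $A$ is $R^*$-sequenceable, uses the $R^*$-sequence $a_1,\dots,a_{k-1}$ (which in characteristic~$2$ has pairwise-distinct consecutive sums and satisfies $a_2=a_1+a_3$) to label $P_m$ for $m<k$, then writes down two explicit length-$2k$ and length-$3k$ sequences built from the $a_i$ that furnish $A$-cordial labelings for $k+2\le m\le 3k$, and finishes with the Patrias--Pechenik extension lemma (if $P_k$ and $P_{mn}$ are $A$-cordial then so is $P_{mn+k}$). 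So the paper's argument is short because it cites the hard sequenceability input rather than reproving it, and it handles the elementary $2$-group case with an $R^*$-sequence and a direct two-line construction rather than with double blocks and an uncontrolled set of base cases. If you want to complete your route, the honest fix is to cite Beals et al.\ for block existence and to replace the Gray-code hand-wave by an $R^*$-sequence argument for $(\zet_2)^p$; at that point your approach and the paper's essentially coincide.
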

They proved the following:
\begin{theorem}[\cite{Pechenik2}]\label{not}
If $A\cong\left(\zet_2\right)^m$, then $P_{2^m}$ and $P_{2^m+1}$ are
not $A$-cordial (and so $A$ is not $\Pe$-cordial).
\end{theorem}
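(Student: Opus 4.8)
The plan is a short counting argument that exploits three elementary features of $A \cong (\zet_2)^m$: every element satisfies $a = -a$ and $2a = 0$, and --- provided $m \ge 2$ --- the sum of all group elements vanishes, since in each coordinate exactly $2^{m-1}$ of the $2^m$ elements equal $1$ and $2^{m-1}$ is even. So I would assume $m \ge 2$ throughout (for $m = 1$ one checks directly that $P_2$ and $P_3$ \emph{are} $\zet_2$-cordial, so the statement is to be read for $m \ge 2$). Write $n = 2^m = |A|$.

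First I would pin down exactly what cordiality forces. For $P_n = v_1 v_2 \cdots v_n$, which has $n$ vertices and $n-1$ edges: since $|A| = n$, the vertex condition makes $c$ a bijection onto $A$, and then the $n-1$ induced edge labels must be pairwise distinct, i.e. they form $A$ with exactly one element deleted. For $P_{n+1} = v_1 v_2 \cdots v_{n+1}$, which has $n+1$ vertices and $n$ edges: the vertex condition forces exactly one element of $A$ onto two vertices and every other element onto exactly one vertex, while the $n$ edge labels form a bijection onto $A$; in particular the edge label $0$ occurs exactly once.

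The one computation I need is that the induced edge labels telescope along a path: $\sum_{i} \bigl(c(v_i) + c(v_{i+1})\bigr) = c(v_1) + c(v_N)$ for $P_N$, because each interior vertex contributes its label twice and $2x = 0$ in $A$. Now apply this. In $P_n$ the bijection $c$ assigns distinct labels to adjacent vertices, so no edge label is $0$; hence the $n-1$ distinct edge labels are precisely $A \setminus \{0\}$, whose sum is $\sum_{a \in A} a = 0$. The telescoping identity then forces $c(v_1) + c(v_n) = 0$, i.e. $c(v_1) = c(v_n)$, contradicting injectivity of $c$. In $P_{n+1}$ the edge labels are all of $A$, again summing to $0$, so the identity gives $c(v_1) = c(v_{n+1})$: the two endpoints carry the unique repeated label. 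But the unique edge labeled $0$ joins two equally-labeled vertices, which must be the only such pair, namely the repeated pair; since $\{v_1, v_{n+1}\}$ is also a pair of distinct equally-labeled vertices, it equals that pair, forcing $v_1$ and $v_{n+1}$ to be adjacent --- impossible since $n+1 \ge 3$. Either way we reach a contradiction, so neither $P_{2^m}$ nor $P_{2^m+1}$ is $A$-cordial, and hence $A$ is not $\Pe$-cordial.

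I do not expect a genuine obstacle; the argument is short once the telescoping identity is spotted. The only points needing a little care are justifying that the vertex and edge label classes are pushed into the exact configurations described above (rather than merely being ``nearly balanced''), and, in the $P_{n+1}$ case, the small combinatorial step that the repeated vertex label must sit on two \emph{consecutive} vertices in order for the unique $0$-edge to exist.
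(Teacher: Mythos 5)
Your proof is correct, and since the paper only imports this statement from \cite{Pechenik2} without proving it, there is no in-paper argument to compare against; your telescoping-sum argument (edge labels sum to $c(v_1)+c(v_N)$ because $2x=0$, while the forced edge-label multiset sums to $0$) is essentially the standard one used for this result in the cited source. One point worth keeping explicit: as literally stated with $m=1$ the claim is false ($P_2$ and $P_3$ are $\zet_2$-cordial), so your restriction to $m\ge 2$ is indeed the intended reading --- consistent with the paper's later phrase ``nontrivial product of copies of $\zet_2$'' --- and it is exactly where your proof needs it, namely to conclude $\sum_{a\in A}a=0$.
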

\begin{theorem}[\cite{Pechenik2}]\label{en}
Suppose $|A| = n$. Then $A$ is $\Pe$-cordial if and only if $P_n$ is $A$-cordial.
\end{theorem}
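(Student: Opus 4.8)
The forward implication is immediate, so the work is the converse: assuming $P_n$ is $A$-cordial, I want to show every path is $A$-cordial. The plan is to first extract the rigid structure of an $A$-cordial labeling $c$ of $P_n = v_1v_2\cdots v_n$: with exactly $n=|A|$ vertices and $n-1$ edges, the two balance conditions force $c$ to be a bijection onto $A$ and the induced edge labels to be pairwise distinct, so that exactly one element $g\in A$ is missing from the edge labels. Writing $\sigma=\sum_{a\in A}a$ and summing the edge labels two ways gives $2\sigma-c(v_1)-c(v_n)=\sigma-g$, i.e. the identity $c(v_1)+c(v_n)=g+\sigma$; and I would invoke the standard fact that $\sigma$ is a $2$-torsion element, so $2\sigma=0$. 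These two facts are the engine of the argument.

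Short paths come for free: for $m\le n$, the restriction of $c$ to $v_1,\dots,v_m$ is an $A$-cordial labeling of $P_m$, since the vertex labels stay distinct and the $m-1$ induced edge labels lie among the $n-1$ distinct labels of $c$. For $m>n$, write $m=kn+r$ with $k\ge 1$ and $0\le r\le n-1$, and build $P_m$ as a chain of $k$ vertex-disjoint copies of $P_n$ followed, when $r>0$, by a copy of $P_r$, consecutive copies joined by one bridge edge. I would label the $j$-th copy by the translate $c+t_j$, choosing $t_1=0$ and alternating so that consecutive copies use distinct elements of $\{0,\sigma\}$ (hence every $t_j$ is $2$-torsion; if $\sigma=0$ every copy simply gets $c$), and letting the trailing $P_r$ copy take the first $r$ labels of its translate.

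The point of this choice is twofold. Since $2t_j=0$, each full copy is again ``rainbow with $g$ missing''. And since $t_j+t_{j+1}$ equals $\sigma$ (when $\sigma\ne 0$) or $0$ (when $\sigma=0$), every bridge edge receives label $(c(v_1)+c(v_n))+(t_j+t_{j+1})=(g+\sigma)+(t_j+t_{j+1})=g$. Thus in the final edge-label tally, each of the $n-1$ elements other than $g$ occurs once per full copy (giving $k$), with $r-1$ of them picking up one more occurrence from the internal edges of the $P_r$ copy, while $g$ occurs only on the bridges (giving $k$ when $r>0$, or $k-1$ when $r=0$). All edge-label classes then have size in $\{k,k+1\}$ (or $\{k-1,k\}$), and the vertex classes plainly have size in $\{k,k+1\}$; so the construction is $A$-cordial and $P_m$ is $A$-cordial. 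Combined with the short-path case, every path is $A$-cordial.

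The real obstacle is the design of the gluing, not any computation inside it: a naive one-vertex-at-a-time induction would require controlling the vertex-occurrence profile and the edge-occurrence profile simultaneously, with no obvious way to keep both balanced when a single new vertex is appended. The concatenation sidesteps this precisely because the identity $c(v_1)+c(v_n)=g+\sigma$ together with $2\sigma=0$ forces every bridge edge to carry exactly the one missing label $g$, so joining perturbs the otherwise perfectly periodic counts in the mildest possible way. Isolating that identity and the role of the $2$-torsion is the crux; the remaining verifications are routine.
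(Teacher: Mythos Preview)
The paper does not prove this theorem: it is quoted from Patrias and Pechenik \cite{Pechenik2} without argument, so there is no ``paper's own proof'' to compare against here. That said, your strategy is correct. The two observations you isolate --- that an $A$-cordial labeling of $P_n$ is necessarily a vertex bijection whose edge labels miss exactly one element $g$, and the identity $c(v_1)+c(v_n)=g+\sigma$ together with $2\sigma=0$ --- are exactly what is needed. Your concatenation of $k$ translated copies (alternating shifts by $0$ and $\sigma$) followed by a truncated tail forces every bridge edge to carry the missing label $g$, while the internal edge multisets are unchanged since $2t_j=0$; the resulting vertex and edge tallies check out in both the $r=0$ and $r>0$ cases.

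For context, the paper does quote one ingredient from \cite{Pechenik2} that hints at the original approach: Lemma~\ref{extension} states that if $P_k$ and $P_{mn}$ are both $A$-cordial then so is $P_{mn+k}$. Your argument essentially proves this extension step and the base case ``$P_{mn}$ is $A$-cordial for all $m\ge 1$'' in one stroke via the explicit concatenation; the underlying mechanism --- translating blocks so that the join edge picks up the missing label --- is almost certainly the same as in the original, and your version has the virtue of being self-contained.
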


This problem is strongly connected with the concept of \textit{harmonious group} defined by Beals et al. \cite{Beals}.
We say that a finite group (not necessary Abelian) $A$ is \textit{harmonious} if the elements of $G$ can
be listed $g_1, g_2,\ldots, g_n$, so that $A= \{g_1g_2, g_2g_3,\ldots, g_ng_1\}$.
Analogously, letting $A^{\#}=\{g_1', g_2',\ldots, g_{n-1}'\}$  denote the set of non-identity elements of $A$, we
say  is harmonious if there is a listing $g_1', g_2',\ldots, g_{n'-1}$ of the elements of
$A^{\#}$ such that $A^{\#} = \{g_1'g_2', g_2'g_3',\ldots, g_{n-1}'g_1'\}$.
Note that if an Abelian group $A$ is harmoniuos then  a cycle $C_n$ is $A$-cordial and by Theorem~\ref{en} $A$ is $\Pe$-cordial.

\begin{theorem}[\cite{Beals}]\label{Beals}
If $A$ is a finite, non-trivial Abelian group, then $A$ is
harmonious if and only $A$ has a non-cyclic or trivial Sylow $2$-subgroup
and $A$ is not an elementary 2-group. Moreover, if $A$ has either a non-cyclic or trivial
Sylow $2$-subgroup, then $A^{\#}$ is harmonious, unless $A\cong\zet_3$.
\end{theorem}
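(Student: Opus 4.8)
The statement is quoted from \cite{Beals}, so I only indicate how I would reconstruct a proof. \emph{Necessity and the exceptions.} The key input is the abelian analogue of Wilson's theorem: in any finite abelian group $A$ the element sum $\sigma(A):=\sum_{a\in A}a$ equals $0$ unless the Sylow $2$-subgroup of $A$ is cyclic and nontrivial, in which case $\sigma(A)$ equals the unique involution. (Pair each $a$ with $-a$; the survivors form the $2$-torsion subgroup $A[2]\cong(\zet_2)^r$, whose element sum is $0$ precisely when $r\neq 1$.) If $g_1,\dots,g_n$ is a harmonious listing of $A$, then summing the induced labels $g_i+g_{i+1}$ around the cycle gives $\sigma(A)=2\sigma(A)$, hence $\sigma(A)=0$, so the Sylow $2$-subgroup must be trivial or non-cyclic. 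To rule out an elementary abelian $2$-group $(\zet_2)^r$, note that $g_i+g_{i+1}=0$ would force $g_i=g_{i+1}$, impossible for a listing of distinct elements, so $0$ never occurs as an edge label there. For the $A^{\#}$ claim the same element-sum computation again forces $\sigma(A)=0$, while $\zet_3$ is excluded by inspection (the sole cyclic listing of $\{1,2\}$ yields the labels $0,0$).

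\emph{Sufficiency.} Write $A\cong S\times B$ with $S$ the (trivial or non-cyclic) Sylow $2$-subgroup and $|B|$ odd; I would build harmonious listings in stages. (1) For $\zet_n$ with $n$ odd the natural order $0,1,\dots,n-1$ works: the labels are $2i-1$ for $1\le i\le n-1$ together with the wrap-around label $n-1$, and since $2$ is invertible modulo $n$ the map $i\mapsto 2i-1$ misses only the value $n-1$ on $\{1,\dots,n-1\}$, which the wrap-around supplies, so the labels form a permutation of $\zet_n$. (2) A composition lemma: if $A$ is harmonious and $|B|$ is odd, then $A\times B$ is harmonious, obtained by spreading a harmonious listing of $A$ over $|B|$ blocks and choosing the $B$-coordinates so that the second coordinates of the edge labels also sweep out $B$ (using once more that $2$ is invertible on $B$). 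Iterating (2) from (1) settles all odd-order groups, and $S\times B$ whenever $S$ itself is harmonious. (3) The $2$-part: when $S\cong\zet_{2^{a_1}}\times\cdots\times\zet_{2^{a_t}}$ with $t\ge 2$ and some $a_i\ge 2$, I would exhibit explicit harmonious listings for the small such groups and lift them through direct products. (4) The mixed exceptional case: when $S\cong(\zet_2)^r$ but $B$ is nontrivial, $S$ is not harmonious and (2) does not apply, so a separate construction is needed that uses the odd factor $B$ to supply the otherwise-missing label $0$. For the ``moreover'' one runs the same programme on $A\setminus\{0\}$, deleting $0$ from each construction and checking that the induced labels still exhaust $A\setminus\{0\}$; this succeeds in every case except $\zet_3$.

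\emph{Where the difficulty lies.} The element-sum argument makes necessity and the two exceptions essentially immediate; the real content is on the sufficiency side. The composition lemma of step (2) has to be set up carefully --- most likely by strengthening the inductive hypothesis to record an extra structural feature of a harmonious listing so that the construction can be repeated --- and steps (3) and (4), namely the explicit constructions for non-cyclic $2$-groups and for an elementary abelian $2$-part sitting over a nontrivial odd complement, are where almost all of the genuine effort goes, together with the bookkeeping needed to transfer everything to the deleted version $A^{\#}$.
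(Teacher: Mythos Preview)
The paper does not prove this theorem: it is quoted verbatim from \cite{Beals} and used as a black box, so there is no in-paper argument to compare your proposal against. Your outline is a reasonable reconstruction of the standard route --- the element-sum (abelian Wilson) obstruction for necessity and the exceptions, and an inductive/direct-product build-up for sufficiency --- and you are right that essentially all the work lies in the explicit constructions of steps (3) and (4) and in the bookkeeping for $A^{\#}$; since the paper supplies none of this, there is nothing further to compare.
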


The older concept is $R$-sequenceability of groups. A group $A$ of order $n$ is said to be $R$-\textit{sequenceable} if the nonidentity elements of the group can be listed in a sequence $g_1, g_2,\ldots, g_{n-1}$ such that  $g_1^{-1}g_2, g_2^{-1}g_3,\ldots, g_{n-1}^{-1}g_1$ are all distinct. This concept was introduced in  1974 by Ringel \cite{Ringel}, who used this concept in his solution of Heawood map coloring problem.
 An abelian group is $R^*$-sequenceable if it has an $R$-sequencing $g_1, g_2,\ldots, g_{n-1}$ such that $g_{i-1}g_{i+1}=g_i$ for some $i$ (subscripts are read modulo $n - 1$). The term was introduced by Friedlander et al. \cite{Friedlander}, who showed that the existence of an $R^*$-sequenceable Sylow 2-subgroup is a sufficient condition for a group to be $R$-sequenceable. It was proved the following:
\begin{theorem}[\cite{Headley}]\label{Headley} An abelian group whose Sylow 2-subgroup is noncyclic and not of order 8 is $R^*$-sequenceable.
\end{theorem}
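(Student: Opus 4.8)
The plan is to reduce the statement to the case of finite abelian $2$-groups and then to handle the relevant $2$-groups directly. Write $A\cong S\oplus H$, where $S$ is the Sylow $2$-subgroup — noncyclic and, by hypothesis, of order $\ne 8$ — and $|H|$ is odd. The result of Friedlander et al.\ quoted above already gives that an $R^*$-sequenceable $S$ forces $A$ to be $R$-sequenceable; what remains is to keep the $R^*$ property alive in passing from $S$ to $A$. I would obtain this from a product construction: an $R$-sequencing of $A$ is assembled by interleaving an $R^*$-sequencing of $S$ with an arrangement of $H$ built from a complete mapping of $H$ (which exists because $|H|$ is odd, by Paige's theorem). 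If the construction behind the Friedlander-type result does not already do so, one designs a variant in which one block of $|S|$ consecutive terms of the global sequencing is, up to a translation, exactly an $R^*$-sequencing of $S$; then the witnessing identity $g_{i-1}+g_{i+1}=g_i$ survives into $A$, and the lifted sequencing is again an $R^*$-sequencing. Once the product construction is chosen this is essentially bookkeeping.

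The substantive part is the $2$-group case: every noncyclic abelian $2$-group except the two noncyclic groups of order $8$, that is, except $(\zet_2)^3$ and $\zet_2\oplus\zet_4$, is $R^*$-sequenceable. The group $\zet_2\oplus\zet_2$ is immediate by hand: listing its nonidentity elements as $a,\ b,\ a+b$ gives consecutive differences $a+b,\ a,\ b$, which are pairwise distinct, and, reading indices modulo $3$, $g_0+g_2=(a+b)+b=a=g_1$, so this is an $R^*$-sequencing. For the remaining groups — all noncyclic abelian $2$-groups of order $\ge 16$ — I would induct on the order: express such a group as $\zet_2\oplus B$ or $\zet_{2^s}\oplus B$ with $B$ a strictly smaller $2$-group (possibly cyclic), fix a suitable traversal of $B$ (a sequencing, an $R$-sequencing, or the list read off a complete mapping, according to what $B$ is), and build an $R^*$-sequencing of the whole group by a doubling construction: concatenate the traversal with its translate by a generator of the new cyclic factor and then repair the two junctions so that all consecutive differences stay pairwise distinct and an $R^*$-triple is created or preserved. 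Since the two order-$8$ groups are genuinely not $R^*$-sequenceable (a finite check), the induction must bottom out above them; the base cases are $\zet_2\oplus\zet_2$ together with the four noncyclic abelian groups of order $16$, namely $(\zet_2)^4$, $\zet_4\oplus(\zet_2)^2$, $\zet_4\oplus\zet_4$ and $\zet_8\oplus\zet_2$, checked directly.

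The main obstacle is precisely this constructive step for $2$-groups, because two requirements must hold simultaneously and they pull against each other: the consecutive differences of the sequence must be pairwise distinct (so that they exhaust the nonidentity elements), and \emph{in addition} some index must satisfy $g_{i-1}+g_{i+1}=g_i$. A $2$-group is rich in involutions, so differences collide easily, and the analysis genuinely depends on the type $(2^{a_1},\dots,2^{a_t})$ of the group: the elementary abelian groups $(\zet_2)^t$, the groups with one dominant cyclic factor, and the mixed types each call for somewhat different doubling recipes, and in each case one must verify that the junction repairs do not wipe out the $R^*$-triple. I expect the order-$16$ base cases and this last verification to be the fiddliest parts, and the natural place where a small finite search is the most economical way to conclude.
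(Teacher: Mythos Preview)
This theorem is not proved in the paper: it is quoted from \cite{Headley} and invoked as a black box in the proof of Theorem~\ref{paths}, where in fact only the special case $A\cong(\zet_2)^p$ with $p>3$ is needed. There is therefore no argument in the paper to compare your proposal against. Your outline --- reduce to the Sylow $2$-subgroup via a product construction with the odd-order complement, then handle noncyclic abelian $2$-groups of order $\ne 8$ by a doubling induction with the order-$4$ and order-$16$ groups as base cases --- is a plausible sketch of how such a result might be established, but deciding whether it matches Headley's actual argument, or whether the junction repairs you describe can really be made to preserve an $R^*$-triple in every type, would require consulting \cite{Headley} itself rather than this paper.
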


Another corresponding problem is a concept of sum-rainbow Hamiltonian cycles on Abelian groups \cite{Lev}. Given a finite Abelian group $A$, consider the complete graph on the set of all elements of
$A$. Find a Hamiltonian cycle in this graph and for each pair of consecutive vertices along
the cycle compute their sum. What are the smallest $\sigma_{\min}(A)$ and the largest $\sigma_{\max}(A)$ possible number of
distinct sums that can emerge in this way. Recall that any group element $\iota\in A$ of order 2 (i.e., $|\iota|=2$) is called an \emph{involution}. 
\begin{theorem}[\cite{Lev}]\label{Lev}
For any finite non-trivial abelian group $A$ we have
$$\sigma_{\max}(A)=\begin{cases}
|A|& if\;A\; is\; neither\; a\; one\mathrm{-}involution\; group,\\
& nor\; an\; elementary\; Abelian \;2\mathrm{-}group;\\
|A|-1& if \; A \;  is\; a \;one\mathrm{-}involution\; group;\\
|A|-2& if \; A \;  is\; an\; elementary\; Abelian \;2\mathrm{-}group\; and\; |A|>2;
\end{cases}$$
\end{theorem}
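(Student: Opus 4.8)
The plan is to split the claim into three upper bounds (all short counting arguments) and three matching lower bounds (constructions), the latter leaning on Beals' classification of harmonious groups, Theorem~\ref{Beals}. Throughout put $n=|A|$ and consider a cyclic ordering $g_1,g_2,\dots,g_n$ of the elements of $A$, with the corresponding edge-sums $s_i=g_i+g_{i+1}$ (indices mod $n$); $\sigma_{\max}(A)$ is the largest possible number of distinct values among $s_1,\dots,s_n$. Everything rests on the identity $s_1+s_2+\cdots+s_n=2\sum_{a\in A}a$ together with the standard evaluation of $\sum_{a\in A}a$: pairing each $a$ with $-a$, the only unpaired terms are the elements of the $2$-torsion subgroup $A[2]=\{a\in A:2a=0\}$, which is elementary abelian, so $\sum_{a\in A}a=\sum_{x\in A[2]}x$ equals $0$ when $A$ has no involution, equals the unique involution $\iota$ when $A$ is a one-involution group, and equals $0$ when $|A[2]|\ge4$. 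In every case $s_1+\cdots+s_n=0$.

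\emph{Upper bounds.} If $A$ is a one-involution group and all $n$ sums were distinct, they would be all of $A$, so $s_1+\cdots+s_n=\sum_{a\in A}a=\iota\ne0$, contradicting the identity; hence $\sigma_{\max}(A)\le n-1$. If $A\cong(\zet_2)^m$ with $n>2$, then no $s_i$ is $0$ (the $g_j$ are distinct, so $g_i\ne g_{i+1}$), so at most $n-1$ values occur; if exactly $n-1$ did, they would be all $n-1$ nonzero elements, one of them, say $y$, twice, and then $s_1+\cdots+s_n=\big(\sum_{a\ne0}a\big)+y=0+y=y\ne0$, a contradiction, so $\sigma_{\max}(A)\le n-2$. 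In the remaining cases the trivial bound $\sigma_{\max}(A)\le n$ is all that is needed.

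\emph{Lower bounds, the clean cases.} If $A$ is harmonious then, by definition, some ordering $g_1,\dots,g_n$ of $A$ satisfies $\{g_1+g_2,\dots,g_{n-1}+g_n,g_n+g_1\}=A$, so all $n$ sums are distinct and $\sigma_{\max}(A)=n$; by Theorem~\ref{Beals} this is precisely the case ``$A$ is neither a one-involution group nor an elementary abelian $2$-group''. If $A\cong(\zet_2)^m$ with $m\ge2$, then $A^{\#}$ is harmonious by Theorem~\ref{Beals}, so the nonzero elements admit a listing $h_1,\dots,h_{n-1}$ with $\{h_1+h_2,\dots,h_{n-2}+h_{n-1},h_{n-1}+h_1\}=A^{\#}$; inserting $0$ to form the cyclic ordering $h_1,h_2,\dots,h_{n-1},0$ yields the sums $\{h_i+h_{i+1}:1\le i\le n-2\}\cup\{h_{n-1},h_1\}=\big(A^{\#}\setminus\{h_{n-1}+h_1\}\big)\cup\{h_{n-1},h_1\}$, and since $h_{n-1}$ and $h_1$ are nonzero and differ from $h_{n-1}+h_1$, this set is exactly $A^{\#}\setminus\{h_{n-1}+h_1\}$, of size $n-2$; with the upper bound, $\sigma_{\max}(A)=n-2$.

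\emph{The remaining (hard) case.} It remains to show $\sigma_{\max}(A)=n-1$ for a one-involution group $A$. I would reduce to constructing a Hamiltonian \emph{path} $g_1,g_2,\dots,g_n$ on $A$ with all $n-1$ consecutive sums distinct: closing it into a cycle cannot lower the count of distinct sums, and the upper bound then forces it to be exactly $n-1$. Writing $A\cong\zet_{2^k}\times B$ with $B$ of odd order and $k\ge1$, the natural approach is (i) to give an explicit ordering of $\zet_{2^k}$ whose $2^k-1$ consecutive sums are distinct, and (ii) when $B$ is nontrivial, to blow this up along a harmonious listing $b_1,\dots,b_{|B|}$ of $B$ (available by Theorem~\ref{Beals}, since $B$ has trivial Sylow $2$-subgroup and odd order $>1$): concatenate $|B|$ copies of the $\zet_{2^k}$-path, the $j$-th copy at height $b_j$, so that the within-copy sums have $B$-coordinate $2b_j$ (distinct over $j$, as doubling is an automorphism of $B$) while the between-copy sums have $B$-coordinates $b_1+b_2,\dots,b_{|B|-1}+b_{|B|}$ (distinct), then orient and rotate the individual copies so that, whenever a between-copy sum shares a $B$-coordinate with some copy's within-copy sums, its $\zet_{2^k}$-coordinate equals the unique value missing from that copy. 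The real difficulty is concentrated here: finding the sequencing of $\zet_{2^k}$ and arranging the stitching so that no two of the $n-1$ sums coincide in \emph{both} coordinates. All other parts of the theorem are either the counting identity of the first paragraph or a direct appeal to Theorem~\ref{Beals}.
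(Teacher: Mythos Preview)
The paper does not prove Theorem~\ref{Lev}; it is quoted verbatim from Lev~\cite{Lev} and used as a black box in later arguments. There is therefore no in-paper proof to compare your proposal against.

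On its own merits your argument handles two of the three cases cleanly. The upper bounds via $\sum_i s_i=2\sum_{a\in A}a=0$ are correct, and the lower bounds for the harmonious case and for $(\zet_2)^m$ with $m\ge2$ follow from Theorem~\ref{Beals} exactly as you say (the insertion of $0$ into an $A^{\#}$-harmonious listing is a nice touch). The one-involution case, however, is only a sketch, and you explicitly flag the gap (``The real difficulty is concentrated here''). The reduction to a Hamiltonian \emph{path} with $n-1$ distinct consecutive sums is valid, and the decomposition $A\cong\zet_{2^k}\times B$ with $|B|$ odd is the natural one; but step~(i), an explicit ordering of $\zet_{2^k}$ with all $2^k-1$ consecutive sums distinct, is not given, and step~(ii), the stitching of $|B|$ rotated copies so that each between-block sum lands in the unique missing $\zet_{2^k}$-value of the relevant block, is asserted rather than carried out. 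Making this work simultaneously for all $|B|-1$ joins is exactly the substantive content of this case, and nothing in the paper at hand supplies it. So your proof has a genuine gap precisely where you say it does.
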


 In this paper we will proof that all finite Abelian groups are weakly $\Pe$-cordial. Moreover we show that all finite Abelian groups of odd order are  $\Ce$-cordial. Note that the second result implies that in  the complete graph on the set of all elements of an Abelian group
$A$ of odd order, there exists a rainbow cycle $C_r$ for any $r=3,\ldots,|A|$. In the last section we show cordial labelings for $2$-regular graphs in some Abelian groups. 
\section{All finite Abelian groups are weakly $\Pe$-cordial}

In this section we prove that Conjecture~\ref{Pechenik} is true. We will use the following lemma.

\begin{lemma}[\cite{Pechenik2}] Suppose $|A|= n$ and let $k,m$ be positive integers. If $P_k$ and $P_{mn}$ are both
$A$-cordial, then so is $P_{mn+k}$.\label{extension}
\end{lemma}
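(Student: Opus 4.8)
The plan is to build an $A$-cordial labeling of $P_{mn+k}$ by joining an $A$-cordial labeling of $P_{mn}$ to an $A$-cordial labeling of $P_k$ with a single new edge, after translating the first labeling so that this new edge carries exactly the ``missing'' edge label of the $P_{mn}$-part.

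The first step is to pin down the edge statistics of an arbitrary $A$-cordial labeling of $P_{mn}$. Since $P_{mn}$ has $mn$ vertices and $mn-1$ edges and $mn-1\equiv n-1\pmod n$, cordiality forces each vertex class to have size exactly $m$, forces $n-1$ of the $n$ edge classes to have size $m$, and forces the remaining one, with label $\delta$ say, to have size $m-1$. So there is a single well-defined deficient edge label $\delta$ to aim at. I would also record the easy fact that translation preserves cordiality: if $c$ is $A$-cordial and $g\in A$, then $c+g$ (given by $(c+g)(v)=c(v)+g$) permutes the vertex classes and shifts every edge label by $2g$, hence permutes the edge classes; in particular, applied to a cordial $P_{mn}$, it moves the deficient label from $\delta$ to $\delta+2g$ while moving the value on a fixed endpoint $u_1$ from $\alpha$ to $\alpha+g$.

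For the construction, fix an $A$-cordial labeling $\hat c$ of $P_{mn}$ with an endpoint $u_1$, set $\alpha=\hat c(u_1)$, and let $\delta$ be its deficient edge label; fix an $A$-cordial labeling $c'$ of $P_k$ with an endpoint $w_1$ and set $\beta=c'(w_1)$. Put $g=\alpha+\beta-\delta$ and $c=\hat c+g$. Build $P_{mn+k}$ from disjoint copies of $P_{mn}$ and $P_k$ by adding the edge $u_1w_1$, labeled by $c$ on the first part and $c'$ on the second. Then every vertex class has size $m+v_{c'}(a)$, which varies by at most $1$ over $a\in A$; and the new edge receives label $c(u_1)+c'(w_1)=(\alpha+g)+\beta=\delta+2g$, which is precisely the deficient edge label of the (translated) $P_{mn}$-part, so every edge class has size $m+e_{c'}(a)$, again varying by at most $1$. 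Hence $P_{mn+k}$ is $A$-cordial.

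I expect no real obstacle: the one thing that must be verified carefully is the rigidity claim of the first step, namely that a cordial $P_{mn}$ has a unique deficient edge label, since this is exactly what allows the single scalar $g$ to be solved for with no case analysis on the structure of $A$, the connecting edge then automatically absorbing the imbalance. Everything else is bookkeeping with class sizes.
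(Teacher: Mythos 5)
Your proposal is correct: the counting that a cordial labeling of $P_{mn}$ has all vertex classes of size exactly $m$ and a unique deficient edge label, the observation that translating by $g$ shifts edge labels by $2g$, and the choice $g=\alpha+\beta-\delta$ (which needs no invertibility of $2$, since only one copy of $g$ enters the connecting edge) all check out, and the class-size bookkeeping is right. Note that the paper itself states this lemma as a citation to Patrias--Pechenik without reproving it; their argument is essentially the same concatenation-plus-translation device you use, so your proof matches the intended one.
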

\begin{theorem}\label{paths}
All finite Abelian groups are weakly $\Pe$-cordial.
\end{theorem}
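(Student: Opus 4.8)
The plan is to reduce the problem to showing that a single path $P_n$ (where $n=|A|$) is $A$-cordial for every finite Abelian group $A$, and more usefully to exhibit, for each $A$, a \emph{cyclic} arrangement of the group elements that is ``almost'' balanced on edges so that Lemma~\ref{extension} can bootstrap it. By Theorem~\ref{en}, $A$ is $\Pe$-cordial as soon as $P_n$ is $A$-cordial, but for the \emph{weak} statement we only need that all sufficiently long paths are $A$-cordial; Lemma~\ref{extension} says that once $P_{mn}$ is $A$-cordial for a single $m$ and $P_k$ is $A$-cordial for every residue $k\in\{1,\dots,n\}$ that we care about, we are done for all lengths in an arithmetic progression, and a small finite set of short exceptions is allowed. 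So the real target is: produce for each $A$ a cyclic labeling of $V(C_{mn})$ by elements of $A$ (each used exactly $m$ times) whose induced edge labels are perfectly balanced (each element exactly $m$ times), and separately handle the short residues.

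The first step is the case split dictated by Theorem~\ref{Beals}: if $A$ has a non-cyclic or trivial Sylow $2$-subgroup and $A$ is not an elementary abelian $2$-group, then $A$ is harmonious, so $C_n$ is $A$-cordial, hence (as noted in the excerpt after Theorem~\ref{Beals}) $A$ is $\Pe$-cordial outright and there is nothing to prove. The remaining cases are: (i) $A$ has a cyclic Sylow $2$-subgroup of order $\ge 2$, i.e.\ $A\cong\zet_{2^t}\oplus B$ with $B$ of odd order and $t\ge 1$; and (ii) $A$ is an elementary abelian $2$-group, $A\cong(\zet_2)^m$. For case (ii) Theorem~\ref{not} already tells us $P_{2^m}$ and $P_{2^m+1}$ are not $A$-cordial, so we must show \emph{all other} lengths work; here I would write $n=2^m$ and, using an explicit Gray-code-style ordering of $(\zet_2)^m$, build a cyclic near-harmonious listing on $C_{2n}$ or $C_{3n}$ whose consecutive sums hit every element the same number of times, then feed residues $k\neq 2^m,2^m+1$ into Lemma~\ref{extension}. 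For case (i), the cleanest route is to invoke the $R$-sequenceability machinery: when the Sylow $2$-subgroup is cyclic of order $\ge 2$ and not equal to $8$, I would use an $R$-sequencing (or the $R^*$-sequencing of Theorem~\ref{Headley} applied to an auxiliary group, or a direct construction) to order the nonidentity elements so that the successive ``differences'' $g_i-g_{i+1}$ are distinct, and then convert this difference-sequencing into a cyclic \emph{sum}-sequencing by the standard trick of replacing $g_i$ with $(-1)^i g_i$ or by passing to the sequence of partial sums; the order-$8$ cyclic Sylow case and the order-$2$ case $A\cong\zet_2\oplus B$ I would treat by hand or by a separate ad hoc listing.

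Concretely, for each $A$ in case (i) I would produce a sequence $a_0,a_1,\dots,a_{n-1}$ of all elements of $A$ such that the multiset $\{a_0+a_1,a_1+a_2,\dots,a_{n-1}+a_0\}$ either equals $A$ (giving a harmonious-type cordial cycle and path) or misses exactly one value and doubles exactly one value; in the latter case $P_n$ read along this cyclic order still has balanced edge classes up to one, and in any case repeating the block $m$ times and splicing gives $A$-cordial paths $P_{mn}$ for all $m$, after which Lemma~\ref{extension} together with a brute-force check of $P_1,\dots,P_{n}$ finishes all but finitely many lengths. The main obstacle I anticipate is precisely the groups that fall through every existing sequenceability theorem — cyclic Sylow $2$-subgroup of order exactly $8$, and the elementary abelian $2$-groups where we know genuine obstructions exist — so the bulk of the work will be explicit constructions (Gray codes, block concatenation, and careful residue-by-residue verification) rather than invoking a black box; getting the edge multiset to be balanced \emph{simultaneously} with the vertex multiset, while only conceding a bounded finite exceptional set, is the crux.
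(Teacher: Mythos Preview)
Your case split via Theorem~\ref{Beals} is sound, but your handling of case~(i) (cyclic nontrivial Sylow $2$-subgroup) rests on a misreading of Theorem~\ref{Headley}. That theorem requires the Sylow $2$-subgroup to be \emph{noncyclic} and not of order~$8$; it says nothing about groups with cyclic Sylow $2$-subgroup, so it does not apply to \emph{any} group in your case~(i), not merely the ``order~$8$'' subcase you single out for hand treatment. Your fallback, converting an $R$-sequencing (distinct differences) into a sum-sequencing via $g_i\mapsto(-1)^ig_i$, does not do what you want either: with $h_i=(-1)^ig_i$ one has $h_i+h_{i+1}=(-1)^i(g_i-g_{i+1})$, so the sums are the differences with alternating signs, and there is no reason these should be pairwise distinct. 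So case~(i) is a genuine gap.

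The paper closes this gap in one stroke using Theorem~\ref{Lev}, which you did not invoke. A group with cyclic nontrivial Sylow $2$-subgroup is exactly a one-involution group, and Theorem~\ref{Lev} gives $\sigma_{\max}(A)=|A|-1$ in that case: there is a Hamiltonian cycle on $A$ whose $|A|$ consecutive sums take $|A|-1$ distinct values. Deleting one edge at a repeated sum yields $P_{|A|}$ with all vertex labels distinct and all $|A|-1$ edge labels distinct, so $P_{|A|}$ is $A$-cordial and Theorem~\ref{en} makes $A$ fully $\Pe$-cordial. Combined with the harmonious case you already isolated, this leaves \emph{only} the elementary abelian $2$-groups. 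That is precisely where Theorem~\ref{Headley} \emph{does} apply (for $p\ge4$ the Sylow $2$-subgroup is noncyclic of order $\ne8$), and the paper uses the resulting $R^*$-sequence $a_1,\dots,a_{k-1}$ with $a_2=a_1+a_3$ to write down explicit sequences of lengths $2k$ and $3k$ whose prefixes give $A$-cordial labelings of $P_m$ for all $k<m\le3k$; Lemma~\ref{extension} then finishes. No Gray codes, no difference-to-sum conversion, and no separate treatment of cyclic Sylow $2$-subgroups are needed.
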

\begin{proof}

Theorems~\ref{not}, \ref{en} and~\ref{Lev} imply that a finite Abelian group $A$ is $\Pe$-cordial if and only if it is not a nontrivial product
of copies of $\zet_2$ (equivalently, if and only if there exists $a\in A$ with $|a| > 2$).

Assume that $A\cong (\zet_2)^p$ for some $p$. For $p=2$ it was proved in (\cite{Pechenik}, Theorem 3.4), whereas for $p=3$ in (\cite{Pechenik2}, Proposition 4.1.). Therefore we consider the case $p>3$. 
Let $k=2^p$ and $a_1,a_2,\ldots,a_{k-1}$ be an $R^*$-sequence of $A$ which exists by Theorem~\ref{Headley}. Note that since $a_i=-a_i$ we obtain that
 the sequence $a_1+a_2,a_2+a_3,\ldots,a_{k-1}+a_1$ is injective. Therefore any path $P_m$ for $m<k$ is $A$-cordial. Without loss of generality we can assume that $a_2=a_1+a_3$. Note that this implies $a_1+a_2=a_3$ and $a_2+a_3=a_1$.

 We will show that for $k+1\leq m\leq 3k$ there exists an $A$-cordial labeling of $P_m$.\\
Take the sequence of length $2k$:
$$a_2,a_3,\ldots,a_{k-1},a_1,0,0,a_3,a_4,\ldots,a_{k-1},a_1,a_2.$$
When we take first $m>k$ elements of this sequence, then we obtain an $A$-cordial labeling of $P_m$.\\
%Let $c(v_i)=a_{i+1}$ for $i=1,\ldots,k-2$, 
%$c(v_{k-1})=a_1$, $c(v_k)=c(v_{k+1})=0$,
%$c(v_{k+1+i})=a_{i+2}$ for $i=1,\ldots,k-3$, $c(v_{2k-1})=a_1$, $c(v_{2k})=a_2$.
%We obtain that:
%$c^*(v_iv_{i+1})\not\in\{0,a_1,a_3\}$ if $i\in\{2,\ldots,k-2\}$ are all distinct,\\
%$c^*(v_iv_{i+1})\not\in\{0,a_1,a_3\}$ if $\{k+2,\ldots,2k-2\}$ are all distinct,\\
%$c^*(v_1v_{2})=a_1$, $c^*(v_{k-1}v_{k})=a_1$, $c^*(v_{k}v_{k+1})=0$, $c^*(v_{k+1}v_{k+2})=a_3$, $c^*(v_{k+1}v_{k+2})=a_3$, $c^*(v_{2k-1}v_{2k})=a_3$. 
%$c^*(v_iv_{i+1})$ are all distinct.% and . $c^*(v_{k-1}v_{k})=a_1$, $c^*(v_{k}v_{k+1})=0$, $c^*(v_{k+1}v_{k+2})=a_3$,
%$c^*(v_{k+1+i}v_{k+2+i})$ are all distinct and $c^*(v_iv_{i+1})\neq 0$ for $i=1,\ldots,k-2$.
Take the sequence of length $3k$:
$$a_1,a_2,\ldots,a_{k-1},a_1,0,0,0,a_3,a_4,\ldots,a_{k-1},a_1,a_2,a_2,a_3,\ldots,a_{k-1}.$$
When we take first $m>2k$ elements of this sequence, then we obtain an $A$-cordial labeling of $P_m$.
Using Lemma~\ref{extension} we obtain that $P_m$ is $A$-cordial for any $m\geq 3k+1$.

\end{proof}

\section{The main result}
%%%%%%%%%%%%%%%%%%%%%%%%%%%%%%%%%%%%%%%%%%%%%%%%%%%%%%%%%%%%%%%%%%%%%%%%%%%%%%%%%%%%%%%%%%%%%%%%%%%%%%%%%%%%

%Recall that the \emph{exponent} $e{(A)}$  of a group $A$ of order $k$ with elements $a_1,a_2,\dots,a_k$ is the smallest possible $r$ such that $ra_i=0$ for any $a_i\in A$, whereas the order of the element $a_i\in A$ is the smallest possible $r$ such that $ra_i=0$. It is well known that in Abelian groups,  $r=\lcm(o_1,o_2,\dots,o_k)$ where $o_i$ is the order of $a_i$ for $i=1,2,\dots,k$. It is well known that $e(\zet_k)=k$, this and Fundamental Theorem of Finite Abelian Groups imply if $A$ is an Abelian group of exponent $e$, then 
%$A\cong\zet_e\times H$ for some Abelian group $H$  (note that $e(H)|e$). %Thus  for any $k$ that divides $e$ there exists  an element $g\in \Gamma$ of order $k$.
We will start with this useful lemma.

\begin{lemma} If $k=|A|$ is odd, then $C_{mk+r}$ for $m>0$ is $A$-cordial.\label{long}
\end{lemma}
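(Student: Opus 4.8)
The plan is to assemble an $A$-cordial labeling of $C_{mk+r}$ (without loss of generality $0\le r<k$; the case $k=1$ is trivial, since labeling every vertex $0$ works) from a harmonious listing of $A$ together with a short ``patch'' that absorbs the remainder $r$. Assume $k=|A|$ is odd and $k\ge 3$. Then $A$ has trivial Sylow $2$-subgroup and is not an elementary $2$-group, so by Theorem~\ref{Beals} (equivalently, since $\sigma_{\max}(A)=|A|$ by Theorem~\ref{Lev}) there is a listing $g_1,g_2,\dots,g_k$ of all elements of $A$ whose cyclic consecutive sums $g_1+g_2,\ g_2+g_3,\ \dots,\ g_{k-1}+g_k,\ g_k+g_1$ are pairwise distinct, hence run over all of $A$. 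For $r=0$ I would label the vertices of $C_{mk}$ cyclically by $m$ consecutive copies of the block $(g_1,\dots,g_k)$: every element is used exactly $m$ times as a vertex label, and the edge labels are $m$ copies of each within-block sum $g_i+g_{i+1}$ ($1\le i\le k-1$) together with $m$ copies of $g_k+g_1$ from the $m$ block joins (counting the wrap-around), so every element is used exactly $m$ times as an edge label; thus $C_{mk}$ is $A$-cordial.

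For $1\le r\le k-1$ I would keep this labeling but perform an \emph{insertion}. Pick two cyclically consecutive vertices of $C_{mk}$ carrying labels $x=g_i$ and $y=g_{i+1}$ (or $x=g_k,y=g_1$), delete the edge between them, and reroute through $r$ new vertices carrying \emph{pairwise distinct} labels $h_1,\dots,h_r$, i.e. replace the edge $x\text{--}y$ by the path $x\text{--}h_1\text{--}\cdots\text{--}h_r\text{--}y$. Since the $h_j$ are distinct, exactly $r$ elements now occur $m+1$ times as vertex labels, so vertex-cordiality is automatic. On the edge side we have deleted the label $x+y$ and created the $r+1$ labels $x+h_1,\ h_1+h_2,\ \dots,\ h_{r-1}+h_r,\ h_r+y$; since the old labeling used every element exactly $m$ times, a short count shows edge-cordiality is preserved as long as the value $x+y$ occurs at least once and at most twice among these $r+1$ new labels and no other value occurs more than once.

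It remains to choose such $h_1,\dots,h_r$. For $r=1$ take $h_1=y$, giving new labels $x+y$ and $2y$, which are distinct since $x\ne y$. For $r=2$ take $h_1\notin\{y,z\}$, where $z$ is the unique element with $2z=x+y$, and $h_2=x+y-h_1$; then $h_1+h_2=x+y$ while $x+h_1$ and $h_2+y$ are pairwise distinct and different from $x+y$ (the verification uses that $2$ is invertible on $A$). For $r\ge 3$ take $h_1=y$: then $x+h_1=x+y$ accounts for the mandatory occurrence of $x+y$, and the remaining labels $h_1+h_2,h_2+h_3,\dots,h_{r-1}+h_r,h_r+h_1$ are exactly the consecutive sums of the cycle $(h_1,h_2,\dots,h_r)$; the required condition becomes precisely that these sums be pairwise distinct, i.e. that $h_1=y,h_2,\dots,h_r$ be the vertices of a \emph{rainbow} cycle $C_r$ in $A$ (one with all $r$ consecutive sums distinct). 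Moreover $y$ may be taken to be any vertex of such a rainbow $C_r$, so it is enough to know that a rainbow $C_r$ exists at all.

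\emph{The crux.} What is left is to show that for every $r$ with $3\le r\le k-1$ there is a rainbow cycle $C_r$ on distinct elements of $A$ (the case $r=k$ is the harmonious listing itself, and $r=3$ is immediate: $0,a,b$ with $a,b$ nonzero and distinct has sums $a,\ a+b,\ b$). For intermediate $r$ I would build a rainbow $C_{r+1}$ from a rainbow $C_r$ by inserting one fresh vertex into one of the $r$ gaps, which is possible as soon as that vertex can avoid the $O(r)$ elements forbidden by the two new sums --- comfortably true while $r$ is small compared with $k$. The difficult range is $r$ near $k$; there I would argue directly, using for instance that a rainbow $C_{k-1}$ omitting an element $\omega$ must realize exactly the sums $A\setminus\{2\omega\}$ (because $\sum_{a\in A}a=0$ in odd order), and reduce via the structure theorem to constructing suitable orderings inside the cyclic factors $\zet_{n_i}$ of $A$ and combining them. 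Making this large-$r$ construction work uniformly for every odd $|A|$ is the step I expect to be the real obstacle, and where most of the casework will go.
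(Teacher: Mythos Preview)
Your approach has a genuine gap: you reduce the problem to the existence of a rainbow $C_r$ in $A$ for every $3\le r\le k-1$, but you do not prove this, and you correctly flag the large-$r$ range as ``the real obstacle.'' In fact, the existence of such rainbow cycles is precisely the content of Lemma~\ref{short}, which the paper proves separately with substantial effort (the structure theorem for finite Abelian groups, Lemma~\ref{koniec}, Lemma~\ref{usefull}, and a case analysis). So your argument, even if completed, would make Lemma~\ref{long} depend on Lemma~\ref{short}, whereas the paper proves Lemma~\ref{long} directly and independently.

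The paper's construction is much simpler and sidesteps the rainbow-cycle issue entirely. Take an $A$-cordial labeling $c_1$ of $C_k$ (a harmonious listing, which exists by Theorem~\ref{Lev}), and label $C_{mk+r}$ by repeating the label $c_1(v_i)$ on $m+1$ consecutive vertices for $i\le r$ and on $m$ consecutive vertices for $i>r$. Vertex-cordiality is immediate. On the edge side, the $k$ boundary edges between consecutive blocks contribute the sums $c_1(v_i)+c_1(v_{i+1})$, one each, hence every element of $A$ exactly once; the edges internal to block $i$ all carry the single label $2c_1(v_i)$, and there are $m$ such edges if $i\le r$ and $m-1$ if $i>r$. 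Because $|A|$ is odd, the doubling map $g\mapsto 2g$ is a bijection on $A$, so the values $2c_1(v_i)$ are pairwise distinct as $i$ varies; summing, every element of $A$ occurs either $m+1$ or $m$ times as an edge label. The key idea you are missing is that stretching a label $a$ across a run of consecutive vertices costs only the one edge-label $2a$, and invertibility of $2$ in odd order makes these doubled values automatically distinct---so no short-rainbow-cycle machinery is needed at all.
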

\begin{proof}
By Theorem~\ref{Lev} there exists an $A$-cordial labeling $c_1$ of $C_k=v_1,\ldots,v_k$. We can ssume that $0<r\leq k-1$. Let
$$C_{mk+r}=v_{1,1},\ldots,v_{1,m},x_{1},v_{r,1},\ldots v_{r,m},x_{r},v_{r+1,1},\ldots,v_{r,m}.\ldots v_{k,1},\ldots,v_{k,m}.$$
Set $c_2(v_{i,j})=c_1(v_i)$ for $i=1,\ldots,k$ and $j=1,\ldots,m$ and $c_2(x_{i})=c_1(v_{i})$ for $i=1,\ldots,r$. 

Observe that 
$$c_2^*(x,y)=\begin{cases}
2c_1(v_i)&\mathrm{for}\; x=v_{i,j},y=v_{i,j+1},\;i=1,\ldots,k,\;j=1,\ldots,m-1,\\
2c_1(v_i)&\mathrm{for}\; x=v_{i,m},y=x_i,\;i=1,\ldots,r,\\
c_1(v_i)+c_1(v_{i+1})&\mathrm{for}\; x=x_i,y=v_{i+1,1},\;i=1,\ldots,r,\\
c_1(v_i)+c_1(v_{i+1})&\mathrm{for}\; x=v_{i,m},y=v_{i+1,1},\;i=r+1,\ldots,k.
\end{cases}$$

Note that since $|A|$ is odd a function $h \colon A\to A$ defined as $h(g)=2g$ is an authomorphism. Thus $e_{c_2}(2c_1(v_i))=m+1$ for $i=1,\ldots,r$ 
and $e_{c_2}(2c_1(v_i))=m$ for $i=r+1,\ldots,k$

Obviously $c_2$ is an $A$-cordial labeling of $C_{mk+r}$.

\end{proof}

\begin{lemma}\label{koniec}Suppose $|A|=k=2l+1$  and $3\leq r \leq k$, then there exists such $k$-cordial labeling $c$ of cycle $C_{r}=v_1\ldots v_r$ such that  is $c(v_1)=0$ and $c(v_r)=l+1$.
\end{lemma}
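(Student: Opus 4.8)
The plan is to build the desired labeling of $C_r$ directly, using the structure of an Abelian group $A$ of odd order $k=2l+1$, and to exploit the fact (already used in Lemma~\ref{long}) that $g\mapsto 2g$ is an automorphism of $A$ since $k$ is odd. The target is a $k$-cordial labeling $c$ of $C_r=v_1\ldots v_r$ — which, for $r\le k$, simply means that $c$ is injective on $\{v_1,\dots,v_r\}$ and the $r$ induced edge sums $c(v_1)+c(v_2),\dots,c(v_{r-1})+c(v_r),c(v_r)+c(v_1)$ are pairwise distinct — together with the two boundary prescriptions $c(v_1)=0$ and $c(v_r)=l+1$. Note $l+1$ is the unique element with $2(l+1)=0$? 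No: in odd order there are no involutions, so $l+1$ is just a distinguished nonzero element; what matters is that we can complete the cycle around these two fixed endpoints.

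The natural route is to reuse Theorem~\ref{Lev}. That theorem gives $\sigma_{\max}(A)=|A|$ for any odd-order $A$ (odd-order groups are never one-involution groups nor elementary abelian $2$-groups), i.e.\ there is a Hamiltonian cycle on the vertex set $A$ all of whose $k$ consecutive sums are distinct — equivalently a cyclic ordering $u_1,u_2,\dots,u_k$ of $A$ with $u_1+u_2,\dots,u_k+u_1$ all distinct; this is exactly a $k$-cordial labeling of $C_k$. The first step is to argue that such a cyclic ordering can be rotated and relabeled (via a translation $x\mapsto x+t$, which shifts every edge sum by $2t$ and, since $x\mapsto 2x$ is a bijection, preserves the ``all sums distinct'' property and also preserves $k$-cordiality) so that two chosen consecutive positions carry the values $0$ and $l+1$. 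Concretely: pick any edge $u_iu_{i+1}$ of the Hamiltonian cycle; after rotating so this edge is $v_1v_r$... wait, $v_1$ and $v_r$ are the endpoints of the path $v_1\ldots v_r$ but in the cycle $C_r$ they are also adjacent, so I want $0$ and $l+1$ on an adjacent pair. Translate the ordering by $t=-u_i$ so that $u_i\mapsto 0$; then I need the neighbour to become $l+1$, i.e.\ $u_{i+1}-u_i=l+1$. Since the differences $u_{i+1}-u_i$ as $i$ ranges over the cycle need not hit every element, I instead should first choose, among all $k$-cordial cyclic orderings, one containing an ordered adjacent pair whose difference is $l+1$ — or, better, prove it by a short direct construction for $C_k$ and then handle $r<k$.

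For $r<k$ I would proceed by a descending induction (or a direct splicing argument analogous to Lemma~\ref{long}): given a $k$-cordial cycle on $r+1$ vertices with the right endpoints, delete one interior vertex and repair, or alternatively build $C_r$ for small $r$ by hand (the cases $r=3,4$ are a direct check: for $r=3$ take $c(v_1)=0,c(v_2)=g,c(v_3)=l+1$ with $g$ chosen so that $g,\,g+l+1,\,l+1$ are distinct and the three sums $g,\,g+l+1,\,l+1$ are distinct, which is possible because $k\ge3$ gives room) and then extend by inserting a new vertex $v$ on the edge $v_{r-1}v_r$, assigning it a value not yet used whose two new edge sums $c(v_{r-1})+c(v)$ and $c(v)+c(v_r)=c(v)+l+1$ avoid the already-used sums and each other. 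A counting argument shows such a value exists whenever $r<k$: we forbid at most $r-1$ values for $c(v)$, at most $r-1$ values of $c(v)$ from the first sum constraint, at most $r-1$ from the second, and one more to keep the two new sums distinct, so at most $\approx 3r-2$ forbidden choices out of $k$ — which is not obviously $<k$, so this crude count fails.

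Because the crude counting fails, the honest plan is: take the $\sigma_{\max}(A)=k$ Hamiltonian cycle from Theorem~\ref{Lev} on $A$, and observe that the sequence of its edge-\emph{differences} $d_i=u_{i+1}-u_i$ is a cyclic sequence summing to $0$ with all \emph{sums} $u_i+u_{i+1}=2u_i+d_i$ distinct; I would then show one may choose the starting point and a translation so that $(c(v_1),c(v_r))=(0,l+1)$ appears as a consecutive pair, and take $v_1,\dots,v_r$ to be the first $r$ vertices of this Hamiltonian cycle \emph{reordered} so that $v_r$ lands on $l+1$ — i.e.\ cut the Hamiltonian cycle of $C_k$ at the edge from $0$ to $l+1$, keep the arc of length $r$ starting at $0$, and check that after closing it up into $C_r$ with the new edge $v_rv_1$ (sum $l+1$) all $r$ sums stay distinct: the first $r-1$ sums are inherited distinct sums from $C_k$, and $c(v_r)+c(v_1)=l+1$ could collide with an inherited one, so we must choose the cut-edge and orientation to avoid that single collision. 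The main obstacle, then, is precisely this: guaranteeing the existence of an adjacent pair $(0,l+1)$ (up to translation, a pair with difference $l+1$) on the $\sigma_{\max}$-cycle \emph{and} ensuring the wrap-around sum $l+1$ is not duplicated among the retained arc's internal sums — I expect this to require either a more careful construction of the $\sigma_{\max}$-cycle than the black-box Theorem~\ref{Lev} provides (e.g.\ revisiting its proof to place a prescribed difference on a prescribed edge), or a separate ad hoc construction for each residue class of $r$, mirroring the explicit sequence constructions used in the proof of Theorem~\ref{paths}.
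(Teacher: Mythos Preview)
Your proposal is not a proof but a plan with self-acknowledged obstacles, and those obstacles are real. The cut-and-close idea from a $\sigma_{\max}$ Hamiltonian cycle needs two things you never secure: (i) that some adjacent pair on the cycle has difference $l+1$ (Theorem~\ref{Lev} gives distinct \emph{sums}, not a prescribed difference anywhere), and (ii) that after closing the arc of length $r$ the new edge-sum $l+1$ avoids the $r-1$ inherited sums. Neither follows from the black-box statement of Theorem~\ref{Lev}, and your fallback insertion/counting argument, as you note, already fails arithmetically for moderate $r$. So as written there is a genuine gap: no construction is actually produced.

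The paper takes a completely different and much shorter route, avoiding Theorem~\ref{Lev} entirely. It splits on the parity of $r$. For $r$ even, it quotes Hovey's explicit $k$-cordial labeling of $C_r$ with $c'(v_1)=0$ and $c'(v_r)=1$, and then observes that since $\gcd(2l+1,l+1)=1$ the rescaling $c(v)=(l+1)\,c'(v)$ is again $k$-cordial and now has endpoints $0$ and $l+1$. For $r=2r'+1$ odd, it writes down a sequential labeling in the spirit of Grace: $c(v_{2i+1})=i$ for $i=0,\dots,r'$ and $c(v_{2i})=l+i$ for $i=1,\dots,r'$, and checks directly that this is $k$-cordial with the required boundary values. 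Both cases are explicit and need no existence arguments or cutting of larger cycles; this is exactly the ``separate ad hoc construction for each residue class of $r$'' you guessed might be needed, but with only two classes and very short formulas.
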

\begin{proof}
Suppose first that $r$ is even by result of Hovey (\cite{Hovey}, Theorem 9) there exists such $k$-cordial labeling $c'$ of $C_r$ that $c'(v_1)=0$, $c'(v_r)=1$.
Since $\gcd(2l+1,l+1)=1$ the labeling defined as $c(v)=(l+1)c'(v)$ for any $v\in V(C_r)$ is  a $k$-cordial labeling.\\

For $r=2r'+1$  we will use the idea of sequential labeling  introduced by Grace \cite{Grace}.
Let $c(v_{2i+1})=i$ for $i=0,1,\ldots,r'$ and $c(v_{2i})=l+i$ for $i=1,2,\ldots,r'$. One can easily check that this is a  $k$-cordial labeling of $C_r$. 
\end{proof}

\begin{lemma} Let $h$, $m$ and $l\leq h$  be  odd positive integers, $3\leq k\leq h$ be an integer.  Suppose that  $n-k=r_1+r_2+\ldots+r_{l}$ for  
\begin{itemize}
	\item $2\leq r_i \leq m-1$ for $i\in\{1,2\ldots,l\}$ or
	\item $2\leq r_i \leq m-1$ for $i\in\{1,2\ldots,l-2\}$, $1\leq r_{l-1}\leq m-2$ and $r_l=1$.	
\end{itemize}

There exists an $H\times \zet_m$-cordial labeling of $C_n$ for any Abelian group $H$ of order $h$ such that $C_k$ is $H$-cordial.
\label{usefull}\end{lemma}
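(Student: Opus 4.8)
The plan is to build an $H\times\zet_m$-cordial labeling of $C_n$ by starting from a given $H$-cordial labeling of $C_k$ and "inflating" $l$ of its vertices into short blocks, in the same spirit as Lemma \ref{long} but now controlling the $\zet_m$-coordinate separately. Write $n-k=r_1+\dots+r_l$ as in the hypothesis and label the cycle $C_k=v_1\dots v_k$ with an $H$-cordial labeling $c_0$; since $h\ge k$ this is just an injection of $V(C_k)$ into $H$, and since $l\le h$ we may choose $l$ distinct vertices $v_{i_1},\dots,v_{i_l}$ of $C_k$. The idea is: at vertex $v_{i_j}$ insert a path of $r_j$ new vertices whose $H$-coordinates repeat the value $c_0(v_{i_j})$, while their $\zet_m$-coordinates are chosen so that, together with the two incident edges, the $\zet_m$-edge-sums run through a near-complete system of residues mod $m$. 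Concretely, if a block has $H$-coordinate constant equal to $a$ and $\zet_m$-coordinates $b_1,\dots,b_{r_j}$ sitting between neighbours carrying $\zet_m$-values $\beta^-$ and $\beta^+$, the edges inside and around the block get $H$-sums $2a$ (internal) or $a+(\text{neighbour})$ (boundary), and $\zet_m$-sums $b_1+b_2,\dots,b_{r_j-1}+b_{r_j}$ internally plus $\beta^-+b_1$ and $b_{r_j}+\beta^+$ at the boundary. The first task is to record the elementary fact that for each block length $r$ in the allowed range one can pick the $b_i$'s so that these sums, as multisets mod $m$, are as balanced as possible (each residue once, with the surplus/deficit concentrated exactly where the hypothesis on $r_i$ allows); the two bulleted cases correspond to whether $n-k$ forces one block of "odd type" that contributes a single edge rather than a full sweep.

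The key steps, in order, are: (1) fix the $H$-cordial labeling $c_0$ of $C_k$ and the $l$ inflation points; (2) on the $H$-coordinate, observe that inflating $v_{i_j}$ into $r_j$ copies of $c_0(v_{i_j})$ makes every element of $H$ appear either $\lfloor n/h\rfloor$ or $\lceil n/h\rceil$ times as a vertex label (using $l\le h$ so no element is overused), and makes the $H$-edge-labels of the $C_k$-edges unchanged while the new internal edges all carry doubled values $2c_0(v_{i_j})$ — here, as in Lemma \ref{long}, I do *not* get to use that $g\mapsto 2g$ is an automorphism because $h$ need not be coprime to $2$, so instead I will argue that the doubled values $2c_0(v_{i_j})$ are themselves chosen balanced because we are free to pick *which* $l$ vertices of $C_k$ to inflate (and $h$ odd is assumed, so $g\mapsto 2g$ on $H$ *is* a bijection — this is why $h$ is required odd); (3) on the $\zet_m$-coordinate, use the combinatorial fact from the previous paragraph to choose the $b_i$'s block by block so the total $\zet_m$-edge-sum multiset is balanced; (4) combine: an element $(a,b)\in H\times\zet_m$ is an edge label iff its two coordinates arise together, and because the $\zet_m$-sweeps are independent of $a$ within each $H$-class, the product labeling is cordial. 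Finally (5) check the vertex condition for the product group, which is immediate from (2) once $n$ and $hm=|H\times\zet_m|$ are compared.

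The main obstacle I anticipate is step (3) together with its interface to step (4): getting the $\zet_m$-edge-sums balanced is not just "choose an arithmetic progression", because each block is flanked by the fixed $\zet_m$-values $\beta^-,\beta^+$ coming from the $C_k$-part (which I also get to choose, but only $k$ of them, reused), and because the parities built into the hypotheses ($h,m,l$ odd, the special last block of length $1$ when needed) are exactly what make the counting work out to within $1$. I expect to handle this by first treating the $\zet_m$-coordinate in isolation as a standalone question — "given target block-lengths $r_1,\dots,r_l$ in the stated ranges, realise a balanced family of consecutive-sum multisets in $\zet_m$" — prove that as a short self-contained sublemma (it is essentially a $\zet_m$-cordiality statement for a disjoint union of paths with prescribed endpoint values), and only then splice it against the $H$-side. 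The $H$-side is comparatively easy once $h$ is odd, since doubling is a bijection and the freedom to choose the inflation points gives enough room; the real work is the mod-$m$ bookkeeping and verifying that the two bulleted cases exhaust the possible residues of $n-k$ modulo the relevant quantities.
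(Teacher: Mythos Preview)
Your high-level plan---inflate $l$ vertices of an $H$-cordial $C_k$ into short blocks carrying a constant $H$-value, and control the $\zet_m$-coordinate separately---is exactly the paper's strategy. But the proposal is missing the one concrete device that makes the $\zet_m$ bookkeeping trivial, and without it your ``sublemma on balanced consecutive sums for disjoint paths with prescribed endpoints'' is both vague and harder than necessary.

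The paper's trick is this. First, inflate the \emph{first} $l$ vertices $v_1,\dots,v_l$ (consecutively, not an arbitrary choice) into blocks of sizes $r_1+1,\dots,r_l+1$. By Lemma~\ref{koniec}, each block of length $r_i+1\le m$ admits a $\zet_m$-cordial labelling of the cycle $C_{r_i+1}$ with its first vertex at $0$ and its last at $\lceil m/2\rceil$; use these as the $\zet_m$-coordinates inside block $i$. On the remaining arc $v_{l+1},\dots,v_k$, let the $\zet_m$-coordinate alternate $0,\lceil m/2\rceil,0,\lceil m/2\rceil,\dots$. Now every edge \emph{between} blocks (including the tail arc) has $\zet_m$-sum exactly $\lceil m/2\rceil$, while every edge \emph{inside} block $i$ has $\zet_m$-sum coming from the $\zet_m$-cordial labelling of $C_{r_i+1}$ with the closing edge $x_{r_i+1}x_1$ deleted---and that deleted edge carried precisely the sum $\lceil m/2\rceil$. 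So inside-block $\zet_m$-sums are pairwise distinct and never equal $\lceil m/2\rceil$. Hence no inside-block edge label can collide with any between-block edge label in $H\times\zet_m$, regardless of any accidental equality $2c'(v_i)=c'(v_j)+c'(v_{j+1})$ on the $H$-side. This is the point your step (4) glosses over: independence of the two coordinates is not enough without this disjointness of the two $\zet_m$-ranges.

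Two further corrections. Your step (2) discusses balancing the $H$-projection of the vertex labels, but that is not the cordiality condition: since $n\le k+l(m-1)\le h\cdot m$, what you actually need is that the $H\times\zet_m$-labelling is \emph{injective}, and this follows immediately from injectivity of $c'$ together with injectivity of each $c_i$ on its block. And your freedom to pick which $l$ vertices to inflate is not needed (and would in fact break the alternation); taking them consecutive is what lets the $0,\lceil m/2\rceil$ pattern on the tail arc connect cleanly to the first and last blocks. The second bulleted case (with $r_l=1$) is handled by a small local modification of the last two blocks, again using Lemma~\ref{koniec} on a cycle of length $r_{l-1}+2$ instead of $r_{l-1}+1$; the point is the same.
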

\begin{proof}
By the assumption there exists an injective $H$-cordial labeling $c'$ of $C_k=v_1,\ldots,v_k$. \\\\
Since $n=\sum_{i=1}^{l}r_i+k$ we can define $C_n$ as 
$$C_{n}=v_{1,1}\ldots,v_{1,r_1+1},v_{2,1}\ldots,v_{2,r_2+1},\ldots,v_{l,1}\ldots v_{l,r_{l}+1},v_{l+1,1},v_{l+2,1}\ldots v_{k,1}.$$

\textit{Case 1.} $2\leq r_i \leq m-1$ for $i\in\{1,2\ldots,l\}$.\\
By Lemma~\ref{koniec} there exist  $\zet_m$-cordial labelings $c_i$ of $C_{r_i+1}=x_1,\ldots x_{r_i+1}$ that $c_i(x_1)=0$ and $c_i(x_{r_i+1})=\left\lceil m/2\right\rceil$  for $i\in\{1,2\ldots,l\}$.

Set $$c(v_{i,j})=\begin{cases}(c'(v_i),c_i(x_j))& \mathrm{for}\;\; i=1,\ldots,l,\; j=1,\ldots,r_i+1,\\
(c'(v_i),0)& \mathrm{for}\;\; i=l+1,l+3,\ldots,k-1,\\
\left(c'(v_i),\left\lceil \frac{m}{2}\right\rceil\right)& \mathrm{for}\;\; i=l+2,l+4,\ldots,k.\\
\end{cases}$$
Observe that  
$$c^*(xy)=\begin{cases}
(2c'(v_i),c_i(x_j)+c_i(x_{j+1}))&\mathrm{for}\; x=v_{i,j},y=v_{i,j+1},\;i=1,\ldots,l,\;j=1,\ldots,r_i,\\
\left(c'(v_i)+c'(v_{i+1}),\left\lceil \frac{m}{2}\right\rceil\right)&\mathrm{for}\; x=v_{i,r_i+1},y=v_{i+1,1},\;i=1,\ldots,l,\\
&\mathrm{or}\; x=v_{i,1},y=v_{i+1,1},\;i=l+1,\ldots,k.
\end{cases}$$
Recall that for $|H|$  odd a function $h \colon H\to H$ defined as $h(g)=2g$ is an authomorphism. Moreover $c_i(x_j)+c_i(x_{j+1})\neq \left\lceil \frac{m}{2}\right\rceil$ for $i=1,\ldots,l,$ $j=1,\ldots,r_i$, therefore $c$ is an $H\times \zet_m$-cordial labeling for $C_n$.\\

\textit{Case 2.} $2\leq r_i \leq m-1$ for $i\in\{1,2\ldots,l-2\}$, $1\leq r_{l-1}\leq m-2$ and $r_l=1$.\\
By Lemma~\ref{koniec} there exist  $\zet_m$-cordial labelings $c_i$ of $C_{r_i+1}=x_1,\ldots x_{r_i+1}$ that $c_i(x_1)=0$ and $c_i(x_{r_i+1})=\left\lceil m/2\right\rceil$  for $i\in\{1,2\ldots,l-2\}$. By Lemma~\ref{koniec} there exists a  $\zet_m$-cordial labeling $c_{l-1}$ of $C_{r_{l-1}+2}=x_1,\ldots x_{r_{l-1}+2}$ that $c_i(x_1)=0$ and $c_i(x_{r_{l-1}+2})=\left\lceil m/2\right\rceil$.

Set $$c(v_{i,j})=\begin{cases}(c'(v_i),c_i(x_j))& \mathrm{for}\;\; i=1,\ldots,l-1,\; j=1,\ldots,r_i+1,\\
%(c'(v_{l-1}),c_{l-1}(x_j))& \mathrm{for}\;\; i=l-1,\; j=1,\ldots,r_{l-1}+1,\\
(c'(v_{l}),\left\lceil m/2\right\rceil-c_{l-1}(x_{r_{l-1}+1}))& \mathrm{for}\;\;i=l,\;j=1,\\ 
(c'(v_{l}),\left\lceil m/2\right\rceil)& \mathrm{for}\;\;i=l,\;j=2,\\ 
(c'(v_i),0)& \mathrm{for}\;\; i=l+1,l+3,\ldots,k-1,\\
\left(c'(v_i),\left\lceil \frac{m}{2}\right\rceil\right)& \mathrm{for}\;\; i=l+2,l+4,\ldots,k.\\
\end{cases}$$
Note that $ c_{l-1}(x_{r_{l-1}+1})\notin\{0,\left\lceil m/2\right\rceil\}$, thus $c$ is injective and $c^*(v_{l,1}v_{l,2})\neq (2c'(v_l),\left\lceil m/2\right\rceil,0)$. Therefore applying the same arguments as above we are done.

\end{proof}

\begin{lemma} If $|A|=k$ is odd, then $C_r$ for $3\leq r< k$ is $A$-cordial.\label{short}
\end{lemma}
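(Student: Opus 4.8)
The plan is to induct on $k=|A|$, using Lemma~\ref{usefull} to reduce a non-cyclic group to a direct factor of smaller order. For the base of the induction there are two cases. If $A$ is cyclic, then $A\cong\zet_k$ with $k$ odd, and Theorem~\ref{cycle} immediately gives that $C_r$ is $k$-cordial, i.e.\ $A$-cordial, for every $r\ge 3$, in particular for $3\le r<k$. The second base case is $A\cong(\zet_3)^2$ — as will become clear, the unique non-cyclic odd Abelian group not reached by the inductive step; here $k=9$ and it suffices to write down, for each $r\in\{3,4,5,6,7,8\}$, a cyclic list of distinct elements of $(\zet_3)^2$ whose $r$ consecutive sums are pairwise distinct (for instance $(0,0),(0,1),(1,1),(1,0)$ for $r=4$, and $(0,2),(2,0),(1,2),(1,1),(0,1),(1,0),(2,1),(2,2)$ for $r=8$; the other lengths are just as easy).

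For the inductive step, suppose $A$ is non-cyclic and $A\not\cong(\zet_3)^2$, and let $A\cong\zet_{d_1}\times\cdots\times\zet_{d_s}$ with $3\le d_1\mid d_2\mid\cdots\mid d_s$ and $s\ge 2$ be its invariant-factor decomposition. Put $m=d_1$ and $H=\zet_{d_2}\times\cdots\times\zet_{d_s}$, so that $A\cong H\times\zet_m$, $m\ge 3$ is odd, and $h:=|H|=d_2\cdots d_s$ is odd; moreover $h\ge 5$, since $h=3$ would force $s=2$, $d_2=3$, hence $d_1=3$ and $A\cong(\zet_3)^2$. As $|H|<k$, the induction hypothesis yields that $C_r$ is $H$-cordial for $3\le r<h$, while $C_h$ is $H$-cordial because every odd-order Abelian group is harmonious (Theorem~\ref{Beals}); altogether $C_r$ is $H$-cordial for all $3\le r\le h$.

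It remains to build an $A$-cordial labeling of $C_r$ for each $r$ with $3\le r\le hm-1$. For $3\le r\le h$ I would take an $H$-cordial labeling $c'$ of $C_r$ — necessarily injective with pairwise distinct induced edge sums, since $r\le h$ — and lift it by $c(v)=(c'(v),0)$; both the required injectivity and the distinctness of edge sums are already witnessed by the first coordinate, so $c$ is $A$-cordial. For $h<r\le hm-1$ I would invoke Lemma~\ref{usefull}: one must express $r$ as (a length in $[3,h]$, for which the corresponding cycle is $H$-cordial by the previous paragraph) plus an odd number $l\le h$ of summands matching the first or the second bullet of that lemma. For a fixed odd $l$, letting the base length run over $[3,h]$ and the summands over $[2,m-1]$, the first bullet realizes every $r\in[2l+3,\,h+l(m-1)]$, and (for $l\ge 3$) the second bullet realizes every $r\in[2l+1,\,h+(l-1)(m-1)]$. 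The main work — really just bookkeeping — is then to check that, as $l$ ranges over the odd integers $1,3,\dots,h$, these intervals, together with the interval $[3,h]$ from the lifting step, overlap consecutively and cover all of $[3,hm-1]$; this goes through precisely because $m\ge 3$ and $h\ge 5$ (the latter being exactly why $(\zet_3)^2$ had to be set aside), with the topmost interval — the first bullet at $l=h$ — reaching $h+h(m-1)=hm$. I expect this interval-covering verification, together with the hand-construction for $(\zet_3)^2$, to be the only genuinely fiddly parts of the argument.
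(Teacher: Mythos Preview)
Your argument is in the same spirit as the paper's --- both hinge on Lemma~\ref{usefull} and handle $(\zet_3)^2$ by hand --- but the packaging differs in two respects. The paper uses the \emph{primary} decomposition and chooses $H$ adaptively so that $e=|H|\le r<|H|\cdot m$ (with $m$ the next prime-power factor); it then invokes Lemma~\ref{usefull} only with base cycle $C_e$ or $C_{e-1}$, whose $H$-cordiality comes directly from Theorem~\ref{Beals}, and it splits off $m=3$ as a separate case with explicit arithmetic on $\beta,r'$. You instead fix the invariant-factor splitting $A\cong H\times\zet_m$ once (with $m=d_1$ the smallest factor), let the base length $k$ range over all of $[3,h]$, and appeal to the \emph{induction hypothesis} for the $H$-cordiality of those smaller cycles. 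Your route is tidier --- a single interval-covering check, no $m=3$ bifurcation --- at the price of needing induction rather than just Theorem~\ref{Beals}.

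One caution worth flagging: the construction in the proof of Lemma~\ref{usefull} places the $l$ expanded blocks at the first $l$ vertices of $C_k$, so it tacitly requires $l\le k$, not merely the stated $l\le h$. Your covering argument allows $k$ to drop to $3$ while $l$ climbs to $h$. If one enforces $k\ge l$, the lower endpoints of your first/second-bullet intervals become $3l$ and $3l-2$ (rather than $2l+3$ and $2l+1$), and the transition $l\to l+2$ then needs $h+l(m-4)\ge 3$; this is fine for $m\ge 5$ but fails for $m=3$ near $l=h$, leaving for instance $r=3h-3$ and $r=3h-1$ uncovered. This is easily patched (restrict to $k\in\{h-1,h\}$ and treat $m=3$ separately, exactly as the paper does), but you should be aware that your uniform argument leans on Lemma~\ref{usefull} in a range where its proof, as written, does not literally apply.
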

\begin{proof}
The Fundamental Theorem of Finite Abelian Groups states that a finite Abelian
group $A$ of order $k$ can be expressed as the direct product of cyclic subgroups of prime-power order. This implies that
$$
	\Gamma\cong\zet_{p_1^{\alpha_1}}\times\zet_{p_2^{\alpha_2}}\times\ldots\times\zet_{p_t^{\alpha_t}}\;\;\; \mathrm{where}\;\;\; k = p_1^{\alpha_1}\cdot p_2^{\alpha_2}\cdot\ldots\cdot p_t^{\alpha_t}
$$
and $p_i$ for $i \in \{1, 2,\ldots,t\}$ are primes, not necessarily distinct. This product is unique up to the order of the direct product.

Without loss of generality we can assume that $p_1^{\alpha_1}\geq p_2^{\alpha_2}\geq \ldots \geq p_t^{\alpha_t}$.
If  $p_1^{\alpha_1}\geq r$ then by Theorem~\ref{cycle} there exists a $p_1^{\alpha_1}$-cordial  labeling $c'$ of $C_r$. Define $c$ as $c(v_i)=(c'(v_i),0)\in A$ which is obviously an $A$-cordial labeling of $C_r$. 

Let now $j\geq2$ be  such index that $e=p_1^{\alpha_1}\cdot\ldots\cdot p_j^{\alpha_j}\leq r$ and $p_1^{\alpha_1}\cdot\ldots\cdot p_{j+1}^{\alpha_{j+1}}> r$.
Then for $H=\zet_{p_1^{\alpha_1}}\times\ldots\times\zet_{p_j^{\alpha_j}}$, $B=\zet_{p_{j+2}^{\alpha_{j+2}}}\times\ldots\times\zet_{p_t^{\alpha_t}}$ and $m=p_{j+1}^{\alpha_{j+1}}$ the group $A\cong H\times\zet_m\times B$. Note that $e\geq m$.

%%%%%By Theorem~\ref{Lev} there exists an $H$-cordial labeling $c_1$ of $C_e=v_1,\ldots,v_e$. Note that if there exists $v_i\in V(C_e)$ such that $2c_1(v_i)=c_1(v_{i+j})$, then $2c_1(v_i)\neq c_1(v_{i-j})$. %Thus without loss of generality  $2c(v_1)\neq c(v_{e})$.

Observe that $r-e=\beta\cdot(m-1)+r'$ for $0\leq r' <m-1$ and $0\leq\beta<e$. If $\beta=0$ and $r'=0$, then by Theorem~\ref{Lev} there exists an $H$-cordial labeling $c_1$ of $C_e=v_1,\ldots,v_e$. Hence  the labeling $c$ defined as $c(v)=(c_1(v),0,0)$ is $A$-cordial. Therefore we can assume that $r'>0$ or $\beta>0$. We will consider two cases on $m$.\\

\textit{Case 1.} $m=3$

Observe that $r'\in\{0,1\}$. Suppose first that $r'=0$. By Theorem~\ref{Beals} there exists an $H$-cordial labeling of $C_e$. If $\beta$ is odd then $r-e=r_1+r_2+\ldots+r_{\beta}$ such that  $ r_i =2$ for  $i\in\{1,2,\ldots,\beta\}$.  By Lemma~\ref{usefull} there exists an $H\times \zet_3$-cordial labeling $c'$ of $C_r$, thus the labeling $c$ defined as $c(v)=(c'(v),0)$ is $A$-cordial. If $\beta>0$ is even, then $r-e=r_1+r_2+\ldots+r_{\beta+1}$ such that  $ r_i =2$ for  $i\in\{1,2,\ldots,\beta-1\}$ and $r_{\beta}=r_{\beta+1}=1$ and as before we apply Lemma~\ref{usefull}.

Suppose now that $r'=1$. If $e>3$ then by Theorem~\ref{Beals} there exists an $H$-cordial labeling (one can say an $H^{\#}$-cordial) of $C_{e-1}$. Note that  $r-(e-1)=r_1+r_2+\ldots+r_{\beta+1}$ such that  $ r_i =2$ for  $i\in\{1,2\ldots,\beta+1\}$ as before we apply Lemma~\ref{usefull}.

Observe that for $e=3$ there is $r\in\{4,6,8\}$. The labeling is presented in Figure~\ref{z33}.
\begin{figure}
\begin{center}
\includegraphics[width=12cm]{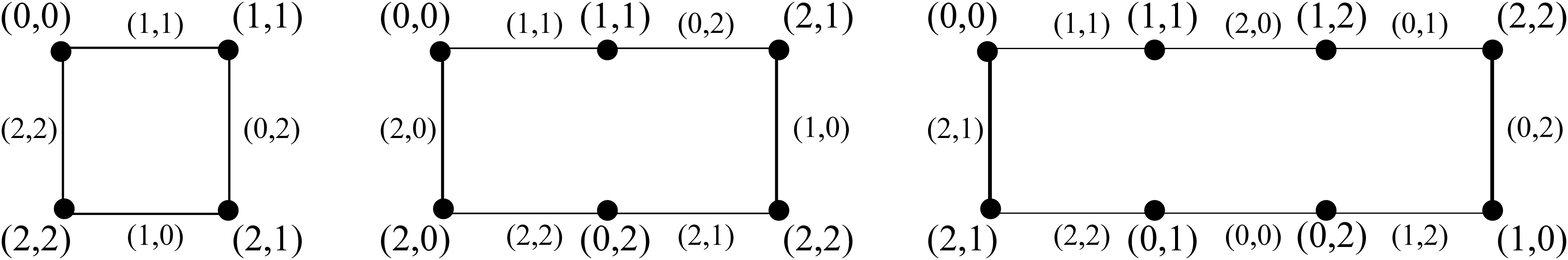}\label{z33}
\caption{A $\zet_3\times\zet_3$-cordial labeling for $C_{4}$, $C_6$ and $C_8$.}
\end{center}
\end{figure}

\textit{Case 2.} $m>3$

If $\beta=0$, then $r'\geq 1$ and  $e>3$. By Theorem~\ref{Beals} there exists an $H$-cordial labeling  of $C_{e-1}$. Note that  $r-(e-1)=r_1$ such that  $ 2\leq r_1 \leq m-1$ and as above we apply Lemma~\ref{usefull}.

Assume now that   $\beta>0$ or $r'>0$, by Theorem~\ref{Beals} there exists an $H$-cordial labeling of $C_e$. 
Let $r-e=r_1+r_2+\ldots+r_{l}$. \\
If  $\beta$ is even $r'\geq 2$ then $r_i=m-1$ for $i\in\{1,2\ldots,\beta\}$ and $r_{\beta+1}=r'$.\\
If  $\beta\geq 2$ even and $r'< 2$ then $r_i=m-1$ for $i\in\{1,2\ldots,\beta-1\}$, $r_{\beta}=m-3\geq2$, $r_{\beta+1}=2+r'\leq 3$.\\
If  $\beta$ is odd and $r'= 0$ then $r_i=m-1$ for $i\in\{1,2\ldots,\beta\}$.\\
If  $\beta$ is odd and $r'\geq 1$ then $r_i=m-1$ for $i\in\{1,2\ldots,\beta-1\}$, $r_{\beta}=m-2\geq2$, $r_{\beta+1}=r'\leq m-2$, $r_{\beta+2}=1$.\\

By Lemma~\ref{usefull} there exists an $H\times \zet_m$-cordial labeling $c'$ of $C_r$, thus the labeling $c$ defined as $c(v)=(c'(v),0)$ is $A$-cordial.

\end{proof}

\begin{figure}
\begin{center}
\includegraphics[width=12cm]{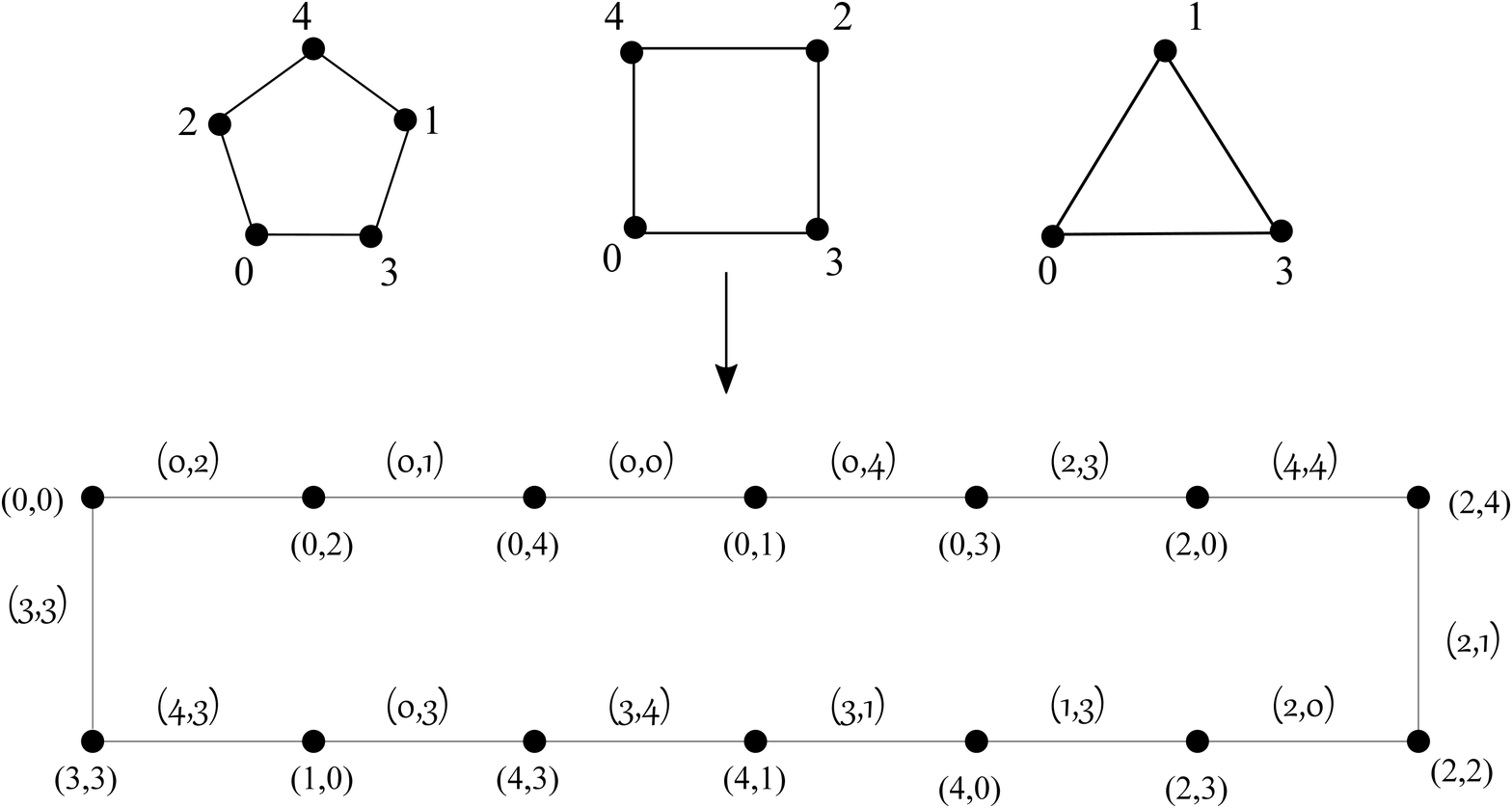}\label{c12}
\caption{A $\zet_5\times\zet_5$-cordial labeling for $C_{14}$.}
\end{center}
\end{figure}

Lemmas ~\ref{long} and \ref{short} implies the following.
\begin{theorem} If $|A|$ is odd, then $A$ is $\Ce$-cordial.
\end{theorem}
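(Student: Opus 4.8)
The plan is to derive the theorem directly from Lemmas~\ref{long} and~\ref{short} by partitioning the cycles according to their length. Write $k=|A|$. If $k=1$ then $A$ is trivial and every graph is $A$-cordial, so we may assume $k\geq 3$ (recall $k$ is odd, hence $k\neq 2$). It then suffices to exhibit an $A$-cordial labeling of $C_n$ for every integer $n\geq 3$.

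For the short cycles, i.e.\ those with $3\leq n<k$, Lemma~\ref{short} already provides an $A$-cordial labeling of $C_n$ (if $k=3$ this range is empty, which is harmless). For the remaining lengths $n\geq k$, I would use division with remainder to write $n=mk+r$ with $0\leq r\leq k-1$; since $n\geq k$ this forces $m\geq 1$, so Lemma~\ref{long} applies and yields an $A$-cordial labeling of $C_{mk+r}=C_n$. In particular the boundary case $n=k$ is the instance $m=1$, $r=0$ of Lemma~\ref{long} (equivalently, the rainbow Hamiltonian cycle on the elements of $A$ guaranteed by Theorem~\ref{Lev}). Every $n\geq 3$ lies in exactly one of the two ranges $3\leq n<k$ and $n\geq k$, so all cycles are $A$-cordial and $A$ is $\Ce$-cordial.

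I do not expect a genuine obstacle at this last stage: all of the substance has already been absorbed into Lemmas~\ref{long} and~\ref{short}, which in turn rest on Theorems~\ref{cycle}, \ref{Beals} and~\ref{Lev} and on the auxiliary Lemmas~\ref{koniec} and~\ref{usefull}. The only points needing a little care are choosing the remainder in the range $0\leq r\leq k-1$ so that Lemma~\ref{long} is invoked with $m\geq 1$, and checking that the two ranges leave no gap — both immediate. As a side remark, since for $r\leq|A|$ an $A$-cordial labeling of $C_r$ is exactly an injective vertex labeling with pairwise distinct edge sums, the theorem also produces a rainbow $C_r$ in the complete graph on the elements of $A$ for every $r=3,\ldots,|A|$.
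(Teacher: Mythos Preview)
Your proposal is correct and follows exactly the approach of the paper: the paper's entire proof is the one-line remark ``Lemmas~\ref{long} and~\ref{short} implies the following,'' and you have simply written out the obvious case split (short cycles via Lemma~\ref{short}, long cycles via Lemma~\ref{long} after division with remainder) together with the trivial $k=1$ case. Your care about the boundary $n=k$ and about choosing $0\le r\le k-1$ so that $m\ge 1$ is appropriate; the side remark on rainbow cycles also matches what the paper notes in the introduction.
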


\section{$2$-regular graphs}

In this section we show cordial labelings for $2$-regular graphs for some Abelian groups. %some interesting connections between path-cordiality and theoretical abstract algebra problems.
Let Bij$(A)$ denote the set of all bijections from $A$ to itself.

A \textit{complete mapping} of a group $A$ is defined
as $\varphi\in$Bij$(A)$ that the mapping  $\theta\colon g \mapsto  g^{-1}\varphi(g)$ is
also bijective \cite{Friedlander2,Hall}. (Some authors refer to $\theta$, rather than $\varphi$, as the complete
mapping.)  Thus $A$  an $R$-sequenceable  if
and only if it has a complete mapping which fixes the identity elements and permute the remaining elements cyclically. 
Complete mappings have  been studied since 1944 \cite{Mann}, initially for their connection to sets of
mutually orthogonal Latin squares. For finite Abelian groups it was proved the following:
\begin{theorem}[\cite{Hall,Hall2}]\label{bijection}
A finite Abelian $A$ group has a complete
mapping if and only if $I(A)\neq1$.
%Then there exist $\phi,\varphi\in$Bij$(A)$ such that $g+\phi(g)+\varphi(g)=0$  for every $g\in A$. %In particular, we mayassumethat #(0)='(0)=0.
\end{theorem}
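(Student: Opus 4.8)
The plan is to establish this classical equivalence by the standard route, working additively so that a complete mapping is a $\varphi\in\mathrm{Bij}(A)$ for which $\theta\colon g\mapsto\varphi(g)-g$ is again a bijection. First I would translate the hypothesis into arithmetic: writing $A[2]=\{a\in A:2a=0\}$ and pairing each $a$ with $-a$ gives $\sum_{a\in A}a=\sum_{a\in A[2]}a$, and since $A[2]\cong(\zet_2)^r$ (with $r$ the $2$-rank of $A$, i.e.\ the number of even invariant factors) this last sum equals $0$ when $r=0$, the unique involution when $r=1$, and $0$ again when $r\ge2$ (each coordinate is $1$ in exactly $2^{r-1}$ of the summands, an even number). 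Thus the condition $I(A)\neq1$ — that $A$ has either no involution or at least two, equivalently $A$ is not a one-involution group, equivalently its Sylow $2$-subgroup is trivial or noncyclic — is precisely $\sum_{a\in A}a=0$.

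Necessity is then a one-line counting argument. If $\varphi$ is a complete mapping, then $\sum_{g}\varphi(g)=\sum_{g}g$ because $\varphi$ is a bijection, and $\sum_{g}\theta(g)=\sum_{g}g$ because $\theta$ is a bijection; on the other hand $\sum_{g}\theta(g)=\sum_{g}\varphi(g)-\sum_{g}g=0$. Hence $\sum_{a\in A}a=0$, i.e.\ $I(A)\neq1$.

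For sufficiency, assume $\sum_{a\in A}a=0$. The load-bearing remark is that complete mappings multiply: a complete mapping of $B$ together with one of $C$ gives $(\varphi_B,\varphi_C)$ on $B\times C$. Split $A=S\times T$ with $S$ the Sylow $2$-subgroup and $|T|$ odd; on $T$ the map $g\mapsto 2g$ is a bijection with associated $\theta=\mathrm{id}$, so $T$ has a complete mapping and it remains to build one on $S$, which by hypothesis is trivial or noncyclic. Using the structure theorem I would group the cyclic factors of $S$ by exponent. A block $(\zet_{2^a})^{e}$ with $e\ge2$ is handled by linear algebra over $\zet_{2^a}$: for $e=2$ the map $(x,y)\mapsto(x+y,-x)$ works, since its matrix and that matrix minus the identity both have determinant a unit; for $e=3$ one lifts to $\zet_{2^a}$ the companion matrix of an irreducible cubic over $\mathbb{F}_2$, so that the reduction modulo $2$ is invertible and has no fixed vector, whence both relevant determinants are odd; and every $e\ge2$ is a sum of $2$'s and $3$'s. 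This reduces everything to a $2$-group whose invariant factors are pairwise distinct — the essential case being $\zet_{2^a}\times\zet_{2^b}$ with $a>b\ge1$, and more generally $\zet_{2^{a_1}}\times\dots\times\zet_{2^{a_k}}$ with the $a_i$ distinct and $k\ge2$ — which I would treat by an explicit construction of the permutation $\theta$, bootstrapped from small base cases such as $(\zet_2)^2$ and $\zet_4\times\zet_2$.

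I expect this last family to be the real obstacle. It resists the easy reductions: e.g.\ $\zet_4\times\zet_2$ is not a direct product of smaller nontrivial groups, and its only proper nontrivial subgroup admitting a complete mapping is $(\zet_2)^2$, whose quotient is $\zet_2$ (which has none), so the multiplication trick is useless here. The linear-map approach also fails, because a block-diagonal endomorphism with diagonal blocks of unequal order cannot reduce modulo $2$ to something that is at once invertible and fixed-point-free. So one genuinely needs a combinatorial construction of $\theta$ for these groups — assembling it coset by coset, or via a starter-type argument — and this is the technical heart of the proof, exactly the point at which Hall's combinatorial argument does its work.
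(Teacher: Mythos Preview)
The paper does not prove this theorem at all: it is quoted as a known result, attributed to Hall and Hall--Paige via the citation \texttt{[Hall,Hall2]}, and used as a black box in the proof of the next theorem. There is therefore no ``paper's own proof'' to compare your proposal against.

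On the merits of your proposal itself: the necessity direction is complete and correct, and your reformulation of $I(A)\neq1$ as $\sum_{a\in A}a=0$ is the right bridge. For sufficiency, the reduction to the Sylow $2$-subgroup and the linear-algebra treatment of blocks $(\zet_{2^a})^e$ with $e\ge2$ are fine. But you yourself flag the genuine gap: the case of a noncyclic abelian $2$-group whose invariant factors are pairwise distinct (already $\zet_4\times\zet_2$, and more generally $\zet_{2^{a_1}}\times\cdots\times\zet_{2^{a_k}}$ with the $a_i$ distinct) is left without a construction. You correctly diagnose why the product trick and the linear-map trick both fail there, but ``bootstrapped from small base cases'' and ``a starter-type argument'' are not a proof. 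Until you supply an explicit construction of $\theta$ (or of $\varphi$) for this family, the sufficiency half is incomplete. Hall's original 1952 argument, incidentally, does not go through your direct-product decomposition; it is a single combinatorial rearrangement lemma showing that any zero-sum sequence of length $|A|$ in an abelian group $A$ arises as the sequence of consecutive differences of some ordering of $A$, from which the complete mapping drops out directly. That route sidesteps the case analysis you are facing.
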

Observe  that $A$ is harmonious
if and only if $A$ has a complete mapping which is also a $|A|$-cycle.  We use a complete mapping to derive the following result.
\begin{theorem}\label{wazne}
Let $A$ and $B$  be  Abelian groups of order $m$ and $n$, respectively such that $I(B)\neq 1$. If $C_{m}$ is $A$-cordial, then $nC_{m}$ is $A \times B$
-cordial.
\end{theorem}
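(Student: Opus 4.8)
The plan is to build an $A \times B$-cordial labeling of $nC_m$ (the disjoint union of $n$ copies of $C_m$) from a given $A$-cordial labeling of a single $C_m$, using a complete mapping of $B$ to glue together the $B$-coordinates. Since $I(B) \neq 1$, Theorem~\ref{bijection} gives a complete mapping $\varphi \in \mathrm{Bij}(B)$, i.e.\ a bijection such that $\theta \colon b \mapsto \varphi(b) - b$ (written additively) is also a bijection of $B$. Let $c_0$ be an $A$-cordial labeling of $C_m = v_1 \ldots v_m$; since $|A| = m$, every element of $A$ appears exactly once as a vertex label and exactly once as an induced edge label. Index the $n$ copies of $C_m$ by the elements of $B$, writing $C^{(b)} = v_1^{(b)} \ldots v_m^{(b)}$.

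The main idea is to make the $B$-coordinate constant on all but one edge of each copy. Concretely, I would set $c(v_i^{(b)}) = (c_0(v_i), b)$ for $i = 1, \ldots, m-1$, and $c(v_m^{(b)}) = (c_0(v_m), \varphi(b))$. Then for a fixed copy $C^{(b)}$: the vertex labels of $A \times B$ on that copy are $\{(a, b) : a \neq c_0(v_m)\} \cup \{(c_0(v_m), \varphi(b))\}$, and as $b$ ranges over $B$ the unions over all copies give each element of $A \times B$ exactly once — here one uses that $\varphi$ is a bijection, so $(c_0(v_m), \varphi(b))$ hits each element of $\{c_0(v_m)\} \times B$ exactly once, while $\varphi(b)$ being "removed" from the constant-$b$ block is compensated because $b$ itself ranges over all of $B$. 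So the vertex label classes all have size exactly one and condition (1) holds. For the edges: the edge $v_i^{(b)} v_{i+1}^{(b)}$ with $1 \le i \le m-2$ gets label $(c_0(v_i) + c_0(v_{i+1}), 2b)$; the edge $v_{m-1}^{(b)} v_m^{(b)}$ gets $(c_0(v_{m-1}) + c_0(v_m), b + \varphi(b))$; and the closing edge $v_m^{(b)} v_1^{(b)}$ gets $(c_0(v_m) + c_0(v_1), \varphi(b) + b)$.

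Now I need the induced edge labels, over all copies, to cover $A \times B$ with each element appearing once. The $A$-coordinates $c_0(v_i) + c_0(v_{i+1})$ (for $i = 1, \ldots, m$, cyclically) are exactly the $m$ distinct elements of $A$, so it suffices that for each fixed $A$-value the corresponding $B$-values, as $b$ runs over $B$, run over all of $B$. For the $m-2$ "interior" $A$-values this $B$-value is $2b$; but $|B| = n$ need not be odd, so $b \mapsto 2b$ need not be a bijection — this is exactly the point where the complete mapping must do real work, and it is the step I expect to be the main obstacle. The fix is to break the symmetry: instead of using the constant $B$-label $b$ on vertices $v_1^{(b)}, \ldots, v_{m-1}^{(b)}$, I would use a translate that depends on the position $i$ in a way that makes the running sums telescope through $\theta$. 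That is, choose the $B$-coordinate of $v_i^{(b)}$ to be $b$ for $i$ odd and $\varphi(b)$ for $i$ even (adjusting the endpoints $v_m$ and the parity of $m$ appropriately, using that $m = |A|$ is whatever it is). Then each interior edge carries $B$-value $b + \varphi(b) = \theta(b) + 2b$ or rather, with the alternating pattern, consecutive vertices contribute $b + \varphi(b)$, which is $\theta(b)$ composed with a bijection — and since $\theta$ is a bijection of $B$, these run over all of $B$ as $b$ does.

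The clean way to present this is: let $\sigma = \theta = \varphi - \mathrm{id}$, a bijection of $B$. Assign the $B$-coordinate of $v_i^{(b)}$ so that every edge of $C^{(b)}$ has $B$-coordinate equal to $\sigma(b)$ except possibly one or two edges whose $B$-coordinates are handled by the bijectivity of $\varphi$ itself; then the edge classes over all of $nC_m$ are $\{(c_0(v_i)+c_0(v_{i+1}), \sigma(b)) : i, b\}$ together with the exceptional edges, and bijectivity of $\sigma$ (resp.\ $\varphi$) on the $B$-coordinate plus the fact that the $A$-coordinates already tile $A$ gives condition (2). I would write out the assignment by cases on the parity of $m$, verify the three edge types above give $B$-coordinates $\{\sigma(b)\}$, $\{\varphi(b)+(\text{const})\}$, $\{\varphi(b)+(\text{const})\}$ each ranging over all of $B$, and conclude that $c$ is an $A \times B$-cordial labeling of $nC_m$. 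The only genuine difficulty is arranging the interior edges so their $B$-coordinate is $\sigma(b)$ rather than $2b$; once the alternating $b \leftrightarrow \varphi(b)$ pattern is in place, everything else is bookkeeping.
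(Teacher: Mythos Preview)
Your overall strategy --- copy the $A$-cordial labeling of $C_m$ onto each of the $n$ cycles and vary the $B$-coordinate by a bijection of $B$ obtained from a complete mapping --- is exactly the paper's approach. The paper, too, alternates two $B$-values along each cycle so that every edge picks up the same $B$-sum, which then ranges bijectively over $B$ as the cycle index varies.

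The gap is in your choice of the two alternating values. You propose putting $b$ on odd positions and $\varphi(b)$ on even positions, and you claim the resulting edge $B$-coordinate $b+\varphi(b)$ ``is $\theta(b)$ composed with a bijection''. This is false: with your convention $\theta(b)=\varphi(b)-b$, one has $b+\varphi(b)=2b+\theta(b)$, and nothing forces $b\mapsto b+\varphi(b)$ to be a bijection. Concretely, take $B=\zet_3$ and the complete mapping $\varphi(b)=2b$; then $\theta(b)=b$ is bijective, but $b+\varphi(b)=3b=0$ for every $b$, so all edges on every cycle receive $B$-coordinate $0$ and the edge condition fails badly. Your final paragraph (``assign the $B$-coordinate \ldots\ so that every edge has $B$-coordinate $\sigma(b)$'') asserts the existence of such an assignment without producing one, so it does not close the gap.

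The paper's fix is a one-line change: alternate $-\theta(b)$ on odd positions and $\varphi(b)$ on even positions. Then every odd--even edge has $B$-sum $-\theta(b)+\varphi(b)=b-\varphi(b)+\varphi(b)=b$, which is trivially a bijection in $b$; the vertex condition still holds because $-\theta$ and $\varphi$ are each bijections of $B$. Once you replace the pair $(b,\varphi(b))$ by $(-\theta(b),\varphi(b))$, the rest of your outline goes through (modulo the parity-of-$m$ bookkeeping you already flagged).
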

\begin{proof} %Let $n$ copies $P_m^1,\ldots, P_{m}^n$ of $P_m$ be defined in the fol
Let $C_{m}^j=v_{1,j}v_{2,j}\ldots v_{m,j}$  for $j=1,\ldots,n$ and $G=C_m^1\cup\ldots\cup C_m^n=nC_m$. Let $B=\{b_1,\ldots,b_n\}$ and let $\varphi\in$Bij$(B)$ be a complete mapping that the mapping  $\theta\colon g \mapsto  -g+\varphi(g)$ is
also bijective, which exists by Theorem~\ref{bijection}.

Let $c_1$ be an $A$-cordial labeling of $C_{m}=x_1\ldots x_{m}$. %By Theorem~\ref{brak} there is $e_{c_1}(c_1(x_1)+c_1(x_{m}))=0$. 
Let $c\colon A\times B\to V(B)$ be such that 

$$c(v_{i,j})=\begin{cases}
(c_{1}(x_i),-\theta(b_j)),&i=1,3,\ldots,2\left\lceil \frac{m}{2}\right\rceil-1,j=1,2,\ldots,n\\
(c_{1}(x_i),\varphi(b_j)),&i=2,4,\ldots,2\left\lceil \frac{m}{2}\right\rceil,j=1,2,\ldots,n.
\end{cases}$$
 Obviously $v_c(z)=1$ for any $z\in A \times B$. 
Since $c^*(v_{i,j}v_{i+1,j})=(c^*_1(x_ix_{i+1}),b_j)$ we obtain that $c$ is an $A \times B$-cordial for $G$.
\end{proof}
If we make stronger assumption on groups $A$ and $B$ then we get the following.
\begin{theorem}\label{wazne2}
Let $A$ and $B$  be  Abelian groups of order $m$ and $n$, respectively such that $I(A),I(B)>1$. If $C_{m}$ is $A$-cordial and $C_n$ is $B$-cordial, then $2C_{mn/2}$ is $A \times B$-cordial.
\end{theorem}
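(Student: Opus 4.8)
The plan is to mimic the construction used in Theorem~\ref{wazne}, but now to exploit complete mappings on \emph{both} factors simultaneously. Write $2C_{mn/2}$ as a disjoint union of two cycles of length $mn/2$ each; equivalently, think of the vertex set as $A\times B$ (which has the right cardinality $mn$), partitioned into two ``tracks'', and arrange the $2C_{mn/2}$ so that along each cycle the two coordinates advance according to the $A$-cordial labeling of $C_m$ and the $B$-cordial labeling of $C_n$ respectively. The key point is that $I(A)>1$ and $I(B)>1$ give, via Theorem~\ref{bijection}, complete mappings $\varphi_A\in\mathrm{Bij}(A)$ and $\varphi_B\in\mathrm{Bij}(B)$ with $\theta_A\colon g\mapsto -g+\varphi_A(g)$ and $\theta_B\colon h\mapsto -h+\varphi_B(h)$ also bijective. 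These let us ``thread'' a Hamiltonian-type path through $A$ (resp.\ $B$) whose consecutive sums realize each element of $A$ (resp.\ $B$) the correct number of times.

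The main steps I would carry out, in order: (1) Fix an $A$-cordial labeling $c_1$ of $C_m=x_1\ldots x_m$ and a $B$-cordial labeling $c_2$ of $C_n=y_1\ldots y_n$. Since $m$ or $n$ may be odd, $mn/2$ being an integer forces at least one of them even; handle the parity bookkeeping by choosing which factor is ``even'' and splitting the length-$mn/2$ cycles into $m$ (or $n$) consecutive blocks of length $n/2$ (or $m/2$). (2) On the $B$-coordinate within each block use the alternating pattern $-\theta_B(b_j),\varphi_B(b_j)$ exactly as in Theorem~\ref{wazne}, so that along a block the induced edge labels in the $B$-coordinate are constant (equal to $b_j$) and run over all of $B$ as $j$ varies; this guarantees each $b\in B$ appears as a $B$-coordinate edge-label the same number of times. (3) On the $A$-coordinate, use $c_1$ across the blocks together with the complete mapping $\varphi_A$ to connect consecutive blocks, so that the ``between-block'' edges carry the labels $c_1^*(x_ix_{i+1})$ in the $A$-coordinate, inheriting the near-uniformity from $c_1$ being $A$-cordial. (4) Verify that the vertex labeling $c\colon V(2C_{mn/2})\to A\times B$ so defined is a bijection (so $v_c(z)=1$ for all $z$), and that the combined edge labels $(a,b)$ are equidistributed to within~$1$, using that $2g$-type maps are automorphisms only in odd order is \emph{not} needed here — instead the bijectivity of $\theta_A,\theta_B$ does the work. (5) Conclude $c$ is $A\times B$-cordial for $2C_{mn/2}$.

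I expect the main obstacle to be step (1)/(3): the careful indexing that makes the two length-$(mn/2)$ cycles close up correctly while keeping \emph{both} coordinate-projections of the edge-label multiset near-uniform. In Theorem~\ref{wazne} only one factor ($B$) had to be controlled via its complete mapping and the other ($A$) was handled trivially by the single cycle $C_m$; here both factors are ``active,'' so the construction must interleave the $c_1$-pattern, the $c_2$-pattern, and the two complete mappings $\varphi_A,\varphi_B$ along one combined cyclic structure, then split it into the two required components. Getting the boundary edges between blocks to contribute exactly the right $A$- and $B$-coordinate values — and checking that the product labels $(a,b)$ don't accidentally cluster — is the delicate part; once the indexing is set up, the verification that $|e_c(a,b)-e_c(a',b')|\le 1$ should reduce to the cordiality of $c_1$ and $c_2$ together with the bijectivity of $\theta_A$ and $\theta_B$.
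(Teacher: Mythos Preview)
Your plan is substantially more complicated than the paper's proof and rests on a misreading of the hypotheses. You write ``since $m$ or $n$ may be odd, $mn/2$ being an integer forces at least one of them even''; in fact $I(A)>1$ and $I(B)>1$ force \emph{both} $m$ and $n$ to be even (a group has more than one involution only if its order is even). More importantly, the paper does \emph{not} use complete mappings at all in this proof --- the hypotheses $I(A),I(B)>1$ are used only for these parity facts, not to invoke Theorem~\ref{bijection}. Your entire apparatus of $\varphi_A,\varphi_B,\theta_A,\theta_B$ is unnecessary: the given cordial labelings $c_1$ of $C_m$ and $c_2$ of $C_n$ are already bijections whose induced edge labelings $c_1^*,c_2^*$ hit each group element exactly once, so they already do the work you want the complete mappings to do.

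The paper's construction is short and direct. Index the $mn$ vertices as $v_{i,j}$ with $1\le i\le m$, $1\le j\le n$; put an edge $v_{i,j}v_{i+1,j}$ for $i<m$ and an edge $v_{m,j}v_{1,j+2}$ (indices of $y$ read modulo $n$). Since $n$ is even this is exactly $2C_{mn/2}$. Now set
\[
c(v_{i,j})=\begin{cases}(c_1(x_i),c_2(y_j)) & i\text{ odd},\\ (c_1(x_i),c_2(y_{j+1})) & i\text{ even}.\end{cases}
\]
Because $m$ is even, every edge --- whether $v_{i,j}v_{i+1,j}$ or the wrap edge $v_{m,j}v_{1,j+2}$ --- receives a label of the form $(c_1^*(x_ix_{i+1}),\,c_2^*(y_\ell y_{\ell+1}))$, and as $(i,j)$ ranges over the grid each pair in $A\times B$ occurs exactly once. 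The ``delicate boundary bookkeeping'' you anticipate is handled entirely by this parity shift in the second coordinate; there is nothing to interleave and no $\theta$-maps to track. Your proposal, as it stands, is an outline with an acknowledged gap at exactly the point where the paper's one-line shift resolves everything.
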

\begin{proof} 
The condition $I(A)>1$ implies $|A|$  even. 
Let $V(2C_{mn/2})=\{v_{i,j}:i=1,\ldots,m,\;j=1,\ldots,n\}$ and $v_{i,j}v_{k,l}\in E(C_{mn})$ if  ($j=l$ and $k=i+1$) or ($l=j+2$, $i=m$ and $k=1$).   
Let $c_1$ be an $A$-cordial labeling of $C_{m}=x_1\ldots x_{m}$ and $c_2$ be a $B$-cordial labeling of $C_{n}=y_1\ldots y_{n}$. %By Theorem~\ref{brak} there is $e_{c_1}(c_1(x_1)+c_1(x_{m}))=0$. 
%Let $c_1$ be an $A$-cordial labeling of $P_{m}=x_1\ldots x_{m}$. %By Theorem~\ref{brak} there is $e_{c_1}(c_1(x_1)+c_1(x_{m}))=0$. 
Let $c\colon A\times B\to V(G)$ be such that 

$$c(v_{i,j})=\begin{cases}
(c_{1}(x_i),c_{2}(y_j)),&i=1,3,\ldots,m-1,\;j=1,2,\ldots,n\\
(c_{1}(x_i),c_{2}(y_{j+1})),&i=2,4,\ldots,m,\;j=1,2,\ldots,n.
\end{cases}$$

Observe that $c^*(v_{i,j}v_{i+1,j})=(c^*_1(x_ix_{i+1}),c^*_2(y_jy_{j+1}))$ and $c^*(v_{m,j}v_{1,j+2})=(c_1(x_m)+c_1(x_{1}),c^*_2(y_{j+1}y_{j+2}))$.
We obtain that $e_c((a,b))=1$ for any $(a,b)\in A\times B$.
\end{proof}
\section*{Acknowledgement}
The work of the author was partially supported by the Faculty of Applied Mathematics AGH UST statutory tasks within subsidy of Ministry of Science and Higher Education.
The author would like to thank prof. Agnieszka G\"{or}lich for her valuable comments.

%%%%%%%%%%%%%%%%%%%%%%%%%%%%%%%%%%%%%%%%%%%%%%%%%%%%%%%%%%%%%%%%%%%%%%%%%%%%%%%%%%%%%%%%%%%%%%%%%%

\end{document}